\documentclass{amsart}

\usepackage[bookmarksnumbered,colorlinks, plainpages]{hyperref}
\hypersetup{colorlinks=true,linkcolor=red, anchorcolor=green, citecolor=cyan, urlcolor=red, filecolor=magenta, pdftoolbar=true}

\usepackage{amsmath, amsthm, amscd, amsfonts, amssymb, graphicx, color, enumerate, xcolor, mathrsfs, latexsym}

\theoremstyle{plain}
\newtheorem{theorem}{Theorem}[section]
\newtheorem{lemma}[theorem]{Lemma}
\newtheorem{corollary}[theorem]{Corollary}
\newtheorem{conjecture}[theorem]{Conjecture}
\newtheorem{question}[theorem]{Question}

\theoremstyle{definition}

\theoremstyle{remark}

\newcommand{\normal}{\trianglelefteq}

\newcommand{\ol}{\overline}
\renewcommand{\leq}{\leqslant} 
\renewcommand{\geq}{\geqslant}

\newcommand{\Aut}{\mathrm{Aut}}

\newcommand{\gen}[1]{\langle#1\rangle}

\newcommand{\set}[1]{\left\{#1\right\}}

\newcommand{\D}{\mathcal{D}}

\begin{document}
\title{Finite groups with five relative commutativity degrees}
\author{M. Farrokhi D. G.}
\date{}
\keywords{Relative commutativity degree, chain of subgroups} \subjclass[2000]{Primary 20P05; Secondary 20D30, 20F16, 20F18.}
\address{Department of Mathematics, Institute for Advanced Studies in Basic Sciences (IASBS), and the Center for Research in Basic Sciences and Contemporary Technologies, IASBS, Zanjan 66731-45137, Iran}
\email{m.farrokhi.d.g@gmail.com, farrokhi@iasbs.ac.ir}
\begin{abstract}
We classify all finite groups with five relative commutativity degrees. Also, we give a partial answer to our previous conjecture on a lower bound of the number of relative commutativity degrees of finite groups.
\end{abstract}
\maketitle
\section{Introduction}
A finite group is either abelian or non-abelian, but not all non-abelian groups share the same commutativity relation among their elements. Roughly speaking, nilpotent groups seems to be more commutative than solvable groups, and solvable groups seems even more commutative than non-solvable groups. To compare groups via their commutativity of elements, one can count all commuting pairs in a finite group $G$ and normalize it by dividing this number by the number of all pairs. This quantity defined explicitly as
\[d(G):=\frac{\#\{(x,y)\in G\times G\ :\ xy=yx\}}{|G|^2}\]
is known as the \textit{commutativity degree} of $G$. Erd\"{o}s and Turan \cite{pe-pt} defined the commutativity degree of groups in their study of symmetric groups and showed that it satisfies the identity
\[d(G)=\frac{k(G)}{|G|},\]
where $k(G)$ denotes the number of conjugacy classes of $G$. This formula is used to give various lower and upper bounds for $d(G)$ in the literature. The most simple upper bound for $d(G)$ is given by Gustafson \cite{whg} in 1973 who showed that $d(G)\leq 5/8$ for all non-abelian groups with equality if and only if $G/Z(G)$ is the Klein four group. The next remarkable and significant result is due to Rusin \cite{djr} in 1979 who classified all finite groups with commutativity degrees greater than $11/32$. Since then the commutativity degree of finite groups is studied actively and we may refer the interested reader to \cite{fm-dm-ans, se, ive-bs, rmg-grr, eh-dm-ans, ph, pl, pl-hnn-yy} for some major contributions to the field.

While the commutativity degree can be applied to distinguish between groups but it is not so strong to reveal internal structure of groups in general. For instance, 
\[d(A_4)=d(D_{18})=\frac{1}{3}\]
while $A_4$ and $D_{18}$ have quite different structures. One way to overcome this problem is to work on local structure of groups by looking at subgroups of a group and how their elements commute with other elements of the group under consideration. Accordingly, one can define the \textit{relative commutativity degree} of a subgroup $H$ of a finite group $G$ as
\[d(H,G):=\frac{\#\{(h,g)\in H\times G\ :\ hg=gh\}}{|H||G|}.\]
The above quantity is introduced by Erfanian, Rezaei, and Lescot \cite{ae-rr-pl} in 2007 where the authors apply it to show, among other results, the following monotonic property of (relative) commutativity degrees
\[d(G)\leq d(K,G)\leq d(H,G)\leq d(H,K)\leq d(H),\]
when $G$ is a finite group and $H$ and $K$ are subgroups of $G$ with $H\leq K$.

Barzegar, Erfanian, and Farrokhi \cite{rb-ae-mfdg} consider the set $\D(G)$ of all relative commutativity degrees of a finite group $G$, namely
\[\D(G)=\{d(H,G)\ :\ H\leq G\},\]
and study the groups $G$ when $\D(G)$ is small. It is evident that $\D(G)$ is a singleton if and only if $G$ is abelian. They show that there is no finite groups $G$ with $\D(G)$ possessing only two elements, and obtain the following classification of finite groups $G$ for which $\D(G)$ has three elements.
\begin{theorem}[{\cite[Theorems 3.1 and 3.2]{rb-ae-mfdg}}]\label{|D(G)|=3}
Let $G$ be a finite group. Then $|\D(G)|=3$ if and only if $G$ satisfies one of the following cases:
\begin{itemize}
\item[(i)]$G/Z(G)\cong C_p\times C_p$ for some prime $p$ (nilpotent case). Then
\[\D(G)=\set{1,\ \frac{2p-1}{p^2},\ \frac{p^2+p-1}{p^3}}.\]
\item[(ii)]$G/Z(G)\cong C_p\rtimes C_q$ is a non-abelian group of order $pq$ for some distinct primes $p$ and $q$ (non-nilpotent case). Then
\[\D(G)=\set{1,\ \frac{p+q-1}{pq},\ \frac{p+q^2-1}{pq^2}}.\]
\end{itemize}
\end{theorem}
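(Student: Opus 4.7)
Plan. I would prove both directions using a reformulation via the center. Since $Z(G) \leq C_G(g)$ for every $g \in G$, the centralizer order $|C_G(g)|$ depends only on $\bar g := gZ(G) \in \bar G$, where $\bar G := G/Z(G)$. Setting $\phi(\bar g) := |C_G(g)|/|G|$ defines $\phi:\bar G \to (0,1]$, and partitioning any $H \leq G$ into cosets of $H \cap Z(G)$ gives the identity
\[d(H,G) \;=\; \frac{1}{|\bar H|}\sum_{\bar h \in \bar H}\phi(\bar h),\qquad \bar H := HZ(G)/Z(G).\]
Thus $\D(G)$ equals the set of averages of $\phi$ over subgroups of $\bar G$, which reduces the problem to a combinatorial question about $\bar G$.

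Sufficiency. In case (i), $\bar G \cong C_p \times C_p$ forces every non-central element of $G$ to have centralizer of index $p$, so $\phi \equiv 1/p$ on $\bar G \setminus \{\bar e\}$; averages over the subgroups of orders $1,p,p^2$ give $1$, $\frac{2p-1}{p^2}$, $\frac{p^2+p-1}{p^3}$. In case (ii), $\bar G \cong C_p \rtimes C_q$ has $\phi = 1/q$ on $\bar P \setminus \{\bar e\}$ and $\phi = 1/p$ on the elements of order $q$; by the symmetry $q + (p-1) = p + (q-1)$, the averages over all order-$p$ and order-$q$ subgroups collapse to the single value $\frac{p+q-1}{pq}$, and the average over $\bar G$ equals $\frac{p+q^2-1}{pq^2}$, so $|\D(G)| = 3$ exactly.

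Necessity. Suppose $|\D(G)| = 3$ with $G$ non-abelian, so $\bar G$ is non-cyclic and $\D(G) = \{1, \alpha, d(G)\}$ with $d(G) < \alpha < 1$. The main structural claim is that every proper non-trivial subgroup of $\bar G$ has prime order. Granted this, the standard classification of non-cyclic finite groups whose subgroup lattice has length at most $2$ gives exactly $\bar G \cong C_p \times C_p$ or a non-abelian group of order $pq$ (i.e., $C_p \rtimes C_q$), yielding (i) and (ii) respectively.

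For the claim, I would assume for contradiction that $\bar G$ contains a chain $1 < \bar J < \bar H < \bar G$. The pulled-back chain $Z(G) < J < H < G$ produces four values $1, \mathrm{avg}_{\bar J}\phi, \mathrm{avg}_{\bar H}\phi, d(G)$ in $\D(G)$ which, by monotonicity, satisfy $1 > \mathrm{avg}_{\bar J}\phi \geq \mathrm{avg}_{\bar H}\phi \geq d(G)$; if the middle two inequalities are strict, then $|\D(G)| \geq 4$, a contradiction. The main obstacle is establishing those strict inequalities: equality $\mathrm{avg}_{\bar J}\phi = \mathrm{avg}_{\bar H}\phi$ forces the $\phi$-average over $\bar H \setminus \bar J$ to match that over $\bar J$, which is rigid because $\bar J$ contains $\bar e$ (where $\phi = 1$) while $\bar H \setminus \bar J$ consists only of non-identity elements (with $\phi \leq 1/p_{\min}$ for $p_{\min}$ the smallest prime dividing $|\bar G|$). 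I would rule this out via a case analysis on the possible value distributions of $\phi$ and the structural constraints on $\bar G$, then handle $\mathrm{avg}_{\bar H}\phi > d(G)$ by the analogous comparison with $\bar G \setminus \bar H$.
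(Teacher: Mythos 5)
Your coset-averaging identity $d(H,G)=\frac{1}{|\ol{H}|}\sum_{\ol{h}\in\ol{H}}\phi(\ol{h})$ is correct, and the sufficiency direction is complete: in both cases the function $\phi$ is forced ($\phi\equiv 1/p$ off the identity when $\ol{G}\cong C_p\times C_p$; $\phi=1/q$ on nontrivial kernel elements and $1/p$ on complement elements when $\ol{G}\cong C_p\rtimes C_q$), and the three listed values are pairwise distinct. Note for context that the present paper does not prove this theorem at all — it imports it from \cite{rb-ae-mfdg} — so the relevant comparison is with the toolkit that line of work actually relies on: Lemma \ref{d(K,G)<=d(H,G)} with its explicit equality criterion, the unconditional strict descent of Corollary \ref{d(<x>,G)<d(<x^p>,G)} along cyclic chains $\gen{x}\supset\gen{x^p}\supset\cdots$, and Lemmas \ref{HnonabelianKabelian} and \ref{CHgABH}.

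The necessity direction has a genuine gap, located exactly where you place "the main obstacle," and it is not a technicality. Your plan needs, for a chain $\ol{1}<\ol{J}<\ol{H}<\ol{G}$, the two strict inequalities $d(J,G)>d(H,G)>d(G)$. Strictness along chains of subgroups is precisely the content of Conjecture \ref{|D(G)|>=l_M(G/Z(G))+1?} (that $|\D(G)|\geq l_M(\ol{G})+1$), which this paper treats as open and proves only under extra hypotheses (Lemma \ref{|D(G)|>=Omega(G/Z(G))+1}); Lemma \ref{d(K,G)<=d(H,G)} expressly allows equality, namely when $K=HC_K(g)$ for all $g\in G$, so strictness is never automatic. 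Your rigidity heuristic addresses at most the first inequality, and only when $1/|\ol{J}|$ dominates $\phi$ on all nonidentity elements — essentially when $|\ol{J}|$ is the smallest prime divisor of $|\ol{G}|$; if $|\ol{J}|=p$ is a larger prime, elements of $\ol{H}\setminus\ol{J}$ may have centralizer index less than $p$, and then the average over $\ol{J}$ (which is greater than $1/p$) can a priori coincide with the average over $\ol{H}\setminus\ol{J}$; ruling this out requires group theory, not arithmetic. The second inequality gets no help at all from the identity element: $d(H,G)=d(G)$ means every $G$-conjugacy class is a single $H$-orbit (the equality case of Lemma \ref{d(K,G)<=d(H,G)} with $K=G$), a configuration your comparison of averages over $\ol{H}$ and $\ol{G}\setminus\ol{H}$ cannot dismiss. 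This is why the established proofs avoid generic chains altogether: Corollary \ref{d(<x>,G)<d(<x^p>,G)} first bounds the element orders of $\ol{G}$ by strict descent along cyclic subgroups, and the residual equality cases are then eliminated using Lemmas \ref{HnonabelianKabelian} and \ref{CHgABH} together with centralizer counting. Until you replace "a case analysis on the possible value distributions of $\phi$" by an actual argument covering both potential equalities, your key claim — and hence the necessity direction — is unproved.
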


The above results are extended by Erfanian and Farrokhi \cite{ae-mfdg} by classifying all finite groups $G$ with $\D(G)$ containing four elements.
\begin{theorem}[{\cite[Theorems 3.1 and 3.5]{ae-mfdg}}]\label{|D(G)|=4}
Let $G$ be a finite group. Then $|\D(G)|=4$ if and only if $G$ satisfies one of the following cases:
\begin{itemize}
\item[(i)]$G/Z(G)$ is a $p$-group of order $p^3$ and $G$ has no abelian maximal subgroups (nilpotent case). Then
\[\D(G)=\set{1,\ \frac{p^2+p-1}{p^3},\ \frac{2p^2-1}{p^4},\ \frac{p^2+p^3-1}{p^5}}.\]
\item[(ii)]$G/Z(G)\cong(C_p\times C_p)\rtimes C_q$ is a minimal Frobenius group and the Sylow $p$-subgroup of $G$ is abelian, where $p$ and $q$ are distinct primes (non-nilpotent case). Then
\[\D(G)=\set{1,\ \frac{p+q-1}{pq},\ \frac{p^2+q-1}{p^2q},\ \frac{p^2+q^2-1}{p^2q^2}}.\]
\end{itemize}
\end{theorem}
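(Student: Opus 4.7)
The plan is to argue both directions of the equivalence. For sufficiency, I would compute $\D(G)$ directly in each case using the identity
\[
d(H,G) = \frac{1}{|H||G|}\sum_{h \in H}|C_G(h)|.
\]
In case (i), with $|G/Z(G)| = p^3$ and no abelian maximal subgroup, a careful analysis of element centralizers---each of which must contain $Z(G)\langle h\rangle$ for $h \in G \setminus Z(G)$---yields precisely three non-trivial centralizer sizes. Combined with the possible intersections $|H \cap Z(G)|$, this gives the four stated values of $d(H,G)$ as $H$ ranges over subgroups of $G$. In case (ii), the Frobenius action on $G/Z(G)$ stratifies elements by whether their image lies in the Frobenius kernel or in a complement, again yielding a small list of centralizer profiles and thus four values.

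For necessity, suppose $|\D(G)| = 4$. Invoking Theorem \ref{|D(G)|=3} and the fact (proved in \cite{rb-ae-mfdg}) that $|\D(G)| = 2$ is impossible, I may assume $G$ is non-abelian and that $G/Z(G)$ is not isomorphic to $C_p \times C_p$ nor to a non-abelian group of order $pq$. I would then split into whether $G$ is nilpotent. In the nilpotent case, $G/Z(G)$ is a non-abelian $p$-group; the bound $|G/Z(G)| = p^3$ would follow by showing that any strictly larger central quotient admits a chain of subgroups producing at least five distinct centralizer sizes, hence at least five values in $\D(G)$. Once $|G/Z(G)| = p^3$ is in place, the existence of an abelian maximal subgroup $M$ would contribute $d(M,G)$ as an additional element of $\D(G)$ beyond the three otherwise forced values, so the absence of such a subgroup is forced by $|\D(G)|=4$.

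The main obstacle is the non-nilpotent case. Here I would analyze $d(P,G)$ and $d(Q,G)$ for Sylow $p$- and $q$-subgroups to show that $|G/Z(G)|$ has exactly two prime divisors, and then use $d(H,G)$ for intermediate subgroups to force $G/Z(G)$ into the minimal Frobenius shape $(C_p \times C_p) \rtimes C_q$ with abelian Sylow $p$-subgroup of $G$. The delicate step is ruling out alternative non-nilpotent structures---larger Frobenius groups, non-Frobenius solvable quotients, or quotients with three or more prime divisors---each of which must be shown via the monotonicity chain $d(G) \leq d(K,G) \leq d(H,G) \leq d(H,K) \leq d(H)$ to produce either more than four relative commutativity degrees, or a set of four incompatible with the stated description. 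This classification-style case analysis is where most of the work lies.
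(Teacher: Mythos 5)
First, a point of orientation: the present paper does not prove this statement at all --- it is imported verbatim from \cite{ae-mfdg}, where it appears as Theorems 3.1 and 3.5, and is only used here as an ingredient in the five-degree classification. So your proposal must be judged on its own merits, and on its own it is a plan rather than a proof. The entire content of the theorem is the necessity direction, and there every hard step is deferred: ``the bound $|G/Z(G)|=p^3$ would follow by showing\ldots'', ``each of which must be shown \ldots to produce either more than four relative commutativity degrees, or a set of four incompatible with the stated description''. These sentences name the obstacles but contain no argument for overcoming them. The cases you wave at are genuinely delicate: nilpotent groups with $|G/Z(G)|=p^4$ and two conjugacy class sizes, and non-nilpotent groups with $\ol{G}\cong C_p\rtimes C_{q^2}$, $C_{p^2}\rtimes C_q$, or $(C_p\times C_p\times C_p)\rtimes C_q$ Frobenius, all exist and have exactly \emph{five} relative commutativity degrees (this is the content of Theorems \ref{nilpotent} and \ref{non-nilpotent} of the present paper), so they sit immediately above the threshold; excluding them from the four-degree classification requires the explicit centralizer bookkeeping that constitutes the bulk of \cite{ae-mfdg}, and none of it is reproduced or replaced in your sketch. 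As it stands, the necessity half is missing, not merely compressed.

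Second, the one place where you do commit to a concrete claim --- the sufficiency computation in case (i) --- is wrong as stated. If $|G/Z(G)|=p^3$ and $G$ has no abelian maximal subgroup, then every non-central $g$ satisfies $C_G(g)=\gen{Z(G),g}$ with $|\ol{C_G(g)}|=p$: if instead $|\ol{C_G(g)}|=p^2$, then $C_G(g)$ is a maximal subgroup with $\gen{Z(G),g}\leq Z(C_G(g))$ of index at most $p$, so $C_G(g)/Z(C_G(g))$ is cyclic and $C_G(g)$ is an abelian maximal subgroup, a contradiction. Hence there is exactly \emph{one} non-trivial conjugacy class size, namely $p^2$, not ``precisely three non-trivial centralizer sizes''. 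Moreover, the four values of $d(H,G)$ arise not from ``the possible intersections $|H\cap Z(G)|$'' (these are irrelevant, since $d(H,G)=d(HZ(G),G)$) but from the four possible orders $|HZ(G)/Z(G)|\in\{1,p,p^2,p^3\}$, exactly as in Lemma \ref{two conjugacy class sizes} with $m=2$. Similarly, the clean way to see that case (i) must exclude abelian maximal subgroups is Lemma \ref{abelian maximal subgroup}: their presence forces $|\D(G)|=2\cdot3-1=5$, not the heuristic of ``one additional element beyond three forced values''. So the sufficiency half can indeed be carried out with the tools already in this paper, but not along the lines you describe.
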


Here by a minimal Frobenius group we mean a Frobenius group none of its proper subgroups are Frobenius. Notice that a Frobenius group $G$ is minimal if and only if its kernel is elementary abelian, its complements are cyclic groups of prime orders, and both the kernel and complements are maximal subgroups of $G$.

For a finite group $G$, we can write the set $\D(G)$ as
\[\D(G)=\{d_0,\ d_1,\ \ldots,\ d_n\}\]
where $1=d_0>d_1>\cdots>d_n=d(G)$. Recently, the above results on finite groups $G$ satisfying $|\D(G)|\leq4$ are generalized by Farrokhi and Safa \cite{mfdg-hs} by describing those subgroups $H$ of a finite group $G$ satisfying $d(H,G)\geq d_3$. They show that $H/H\cap Z(G)$ is a product of at most $i$ primes when $d(H,G)=d_i\geq d_3$ and pose the following conjecture:
\begin{conjecture}\label{|D(G)|>=Omega(G/Z(G))+1?}
Let $G$ be a finite group and $H$ be a subgroup of $G$ with $d(H,G)=d_k$. Then $|H/Z(H,G)|$ is a product of at most $k$ primes. As a result, $|G/Z(G)|$ is a product of at most $|\D(G)|-1$ primes.
\end{conjecture}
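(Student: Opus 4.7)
The plan is to prove both parts of the conjecture simultaneously by induction on $k$, noting first that the second assertion is the special case $H=G$ of the first: $d(G,G)=d(G)=d_n$ with $n=|\D(G)|-1$, and $Z(G,G)=Z(G)$. The base case $k=0$ is immediate, since $d(H,G)=1$ forces $H\leq Z(G)$ and hence $H/Z(H,G)$ is trivial.

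For the inductive step, given $H$ with $d(H,G)=d_k$, set $m=\Omega(\ord{H/Z(H,G)})$ and choose a chain
\[Z(H,G)=H_0\lneq H_1\lneq\cdots\lneq H_m=H\]
with each index $[H_{i+1}:H_i]$ prime, obtained by pulling back a composition series of $H/Z(H,G)$. Since $H_0\leq H_i\leq H$ and $H_0=H\cap Z(G)$, one checks that $H_i\cap Z(G)=H_0=Z(H_i,G)$ for every $i$. The Erfanian--Rezaei--Lescot monotonicity then yields
\[1=d(H_0,G)\geq d(H_1,G)\geq\cdots\geq d(H_m,G)=d_k,\]
a weakly descending sequence in $\D(G)$. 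If these $m+1$ values are pairwise distinct, they form a subset of $\{d_0,d_1,\ldots,d_k\}$, whence $m\leq k$ and the inductive step is complete.

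The main obstacle is therefore to rule out \emph{plateaus}, that is, steps where $d(H_i,G)=d(H_{i+1},G)$. Writing $[H_{i+1}:H_i]=p$, this equality is equivalent to
\[(p-1)\sum_{h\in H_i}\ord{C_G(h)}=\sum_{x\in H_{i+1}\setminus H_i}\ord{C_G(x)},\]
so the average centralizer size over $H_i$ coincides with that over $H_{i+1}\setminus H_i$. I would attack this in two complementary ways. First, try to perturb the chain: replace $H_{i+1}$ by another prime-index overgroup of $H_i$ inside $H_{i+2}$ (or inside $H$, if $i=m-1$) that breaks the equality. Second, exploit the induction: if \emph{some} chain gives $d(H_{m-1},G)=d_j$ with $j<k$, then induction yields $\Omega(\ord{H_{m-1}/Z(H_{m-1},G)})\leq j\leq k-1$, and since $\ord{H/Z(H,G)}=p\cdot\ord{H_{m-1}/Z(H_{m-1},G)}$, the desired bound $\Omega\leq k$ follows.

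The hardest case, and presumably the reason the statement remains a conjecture in general, is when $H/Z(H,G)$ has a rigid subgroup lattice (cyclic of prime-power order, or with a unique maximal subgroup), so that no chain perturbation is available and every maximal chain plateaus at the top. Here I would abandon the general argument and fall back on the structure theorems for groups with small $|\D(G)|$ (Theorems \ref{|D(G)|=3}, \ref{|D(G)|=4}, and the classification for $|\D(G)|=5$ developed in this paper), verifying the conjecture for small $k$ by direct inspection. This should suffice to produce the partial answer advertised in the abstract, without settling the conjecture in full generality.
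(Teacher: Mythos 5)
The statement you set out to prove is a \emph{conjecture}: the paper does not prove it and explicitly leaves it open (it is quoted from \cite{mfdg-hs}, where it is verified only for $k\leq 3$). The paper's own partial evidence consists of: the observation that all groups with $|\D(G)|\leq 5$ satisfy it, as a byproduct of the classification theorems; the supersolvable case, cited from \cite{mfdg-hs}; and Lemma \ref{|D(G)|>=Omega(G/Z(G))+1}, which proves the \emph{weaker} Conjecture \ref{|D(G)|>=l_M(G/Z(G))+1?} for groups whose central quotient has only prime-power-order nontrivial elements. So there is no proof in the paper to compare yours against, and your proposal, as you concede in your last paragraph, does not settle the conjecture either; the question is whether your partial argument is sound.

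It has two genuine gaps. First, your chain need not exist: a composition series of $H/Z(H,G)$ has \emph{simple} factors, which have prime order only when $H/Z(H,G)$ is solvable, so "pulling back a composition series" does not produce prime indices in general. Worse, for non-solvable central quotients a prime-index chain of length $m=\Omega(|H/Z(H,G)|)$ can fail to exist at all: take $G=H=\mathrm{SL}(2,13)$, so $H/Z(H,G)\cong \mathrm{PSL}(2,13)$; every maximal subgroup of $\mathrm{PSL}(2,13)$ has composite index, so every subgroup chain has length strictly less than $\Omega(1092)=5$. This exposes a conceptual mismatch: even if plateaus were excluded, a chain argument bounds the maximal chain length $l_M(H/Z(H,G))$, i.e., it addresses Conjecture \ref{|D(G)|>=l_M(G/Z(G))+1?}, not the stated bound on the number of prime factors of $|H/Z(H,G)|$; the two agree only in the solvable case. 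Second, the plateau problem is precisely the open content of the conjecture, and your two strategies for it (perturbing the chain, invoking the induction hypothesis) are hopes rather than arguments: a plateau can occur at the top of \emph{every} chain even when many prime-index overgroups are available, since the equality $d(H_i,G)=d(H_{i+1},G)$ is governed by centralizer sizes, not by the subgroup lattice alone. For contrast, the paper's Lemma \ref{|D(G)|>=Omega(G/Z(G))+1} excludes plateaus by an actual mechanism: if $Z(G)\leq H<K$ and $d(H,G)=d(K,G)$, then $K=HC_K(g)$ for all $g\in G$ by Lemma \ref{d(K,G)<=d(H,G)}; choosing a prime $p$ such that $[K:H]$ is not a $p$-power and a nontrivial $p$-element $\ol{g}$, the prime-power-order hypothesis forces $\ol{C_K(g)}$ to be a $p$-group, whence $|HC_K(g)|=|H|[C_K(g):C_H(g)]\neq|K|$, a contradiction. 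Your outline contains no mechanism of this kind, and without one the induction cannot get past its first plateau; your final fallback (checking the classifications for small $|\D(G)|$) merely reproduces the paper's observation rather than proving anything new.
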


Also, they state a weaker version of the above conjecture as
\begin{conjecture}\label{|D(G)|>=l_M(G/Z(G))+1?}
Every finite group $G$ satisfies
\[|\D(G)|\geq l_M\left(\frac{G}{Z(G)}\right)+1,\]
where $l_M(X)$ denotes the maximum length of chains of subgroups of the group $X$.
\end{conjecture}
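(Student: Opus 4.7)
My plan is to produce, inside $\D(G)$, a strictly decreasing sequence of $l_M(G/Z(G))+1$ distinct values. To this end, fix a chain
\[\overline{H}_0<\overline{H}_1<\cdots<\overline{H}_m\]
of subgroups of $G/Z(G)$ of maximum length $m=l_M(G/Z(G))$, and let $H_i$ denote its preimage in $G$, so that $Z(G)=H_0<H_1<\cdots<H_m=G$ is a chain in $G$ with each $H_i\supseteq Z(G)$. The Erfanian--Rezaei--Lescot monotonicity inequality recalled in the introduction already gives
\[1=d(H_0,G)\geq d(H_1,G)\geq\cdots\geq d(H_m,G)=d(G),\]
so it suffices to upgrade every $\geq$ to a strict inequality.

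The crux is the following strict-monotonicity statement, which I would isolate as a lemma: \emph{if $Z(G)\leq K<L\leq G$ and $L$ covers $K$ in the subgroup lattice of $G$, then $d(K,G)>d(L,G)$.} Unwinding the definition via the identity $|C_G(x)|/|G|=1/|x^G|$, a short calculation shows this is equivalent to
\[d(K,G)\;>\;\frac{1}{|L|-|K|}\sum_{l\in L\setminus K}\frac{1}{|l^G|}.\]
Because $Z(G)\leq K$, no $l\in L\setminus K$ is central, so each summand is at most $1/2$ and the right-hand side is bounded by $1/2$. In the two endpoint cases the inequality then drops out: for $K=Z(G)$ we have $d(K,G)=1$, and for $L=G$ the elementary estimate $d(K,G)\geq|Z(G)|/|K|$ combined with $\sum_g 1/|g^G|=k(G)$ controls the right-hand side.

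The main obstacle is the intermediate range $Z(G)\lneq K\lneq L\lneq G$, where $d(K,G)$ need not be close to $1$ and the class sizes $|l^G|$ for $l\in L\setminus K$ need not be uniformly large. To attack this range I would exploit the covering condition on $\overline{K}<\overline{L}$, which forces $L/K$ to be cyclic of prime order in favourable cases (for instance whenever $G$ is nilpotent, or whenever $\overline{L}/\overline{K}$ occurs as a chief factor of $G/Z(G)$), together with the inequality $|C_G(l)|\leq|Z(G)|\cdot|C_{G/Z(G)}(\overline{l})|$ that pushes estimates up from the quotient. Getting a uniform bound in the fully non-normal, non-prime-index case is the genuinely hard part, and is where I expect a full proof of Conjecture \ref{|D(G)|>=l_M(G/Z(G))+1?} to require essentially new ideas.

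As a partial result in keeping with the paper's title, I would combine the lemma in the ranges where it holds with the classification from \cite{mfdg-hs} of subgroups $H$ realizing $d(H,G)\geq d_3$. This should at least confirm the conjecture when $l_M(G/Z(G))\leq 4$, which is the regime $|\D(G)|\leq 5$ that is the focus of the present paper, while leaving open the general intermediate case described above.
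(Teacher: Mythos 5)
You should first be clear about the status of the statement: it is a \emph{conjecture} in the paper, and the paper does not prove it either. What the paper offers is the partial result Lemma \ref{|D(G)|>=Omega(G/Z(G))+1} (the conjecture holds for non-nilpotent $G$ all of whose nontrivial elements of $G/Z(G)$ have prime power orders), plus the observation that supersolvable groups and groups with $|\D(G)|\le 5$ satisfy it via \cite{mfdg-hs} and the classification theorems. Your skeleton --- lift a maximum-length chain of $\ol{G}$ to a chain $Z(G)=H_0<H_1<\cdots<H_m=G$, invoke Erfanian--Rezaei--Lescot monotonicity, and upgrade each inequality to a strict one --- is exactly the skeleton of the paper's partial result, and your reduction of strictness for a pair $K<L$ to the averaged class-size inequality $d(K,G)>\frac{1}{|L|-|K|}\sum_{l\in L\setminus K}1/\ord{l^G}$ is algebraically correct. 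But the proposal does not close the argument, and one of the two endpoint cases you claim to dispose of is circular.

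Concretely: (1) Your ``key lemma'' (strict decrease across covering pairs containing $Z(G)$) is not a deferrable technicality: it implies the conjecture and is precisely what is open. The paper's Lemma \ref{d(K,G)<=d(H,G)} characterizes equality --- $d(K,G)=d(L,G)$ with $K\le L$ if and only if $L=KC_L(g)$ for all $g\in G$ --- and ruling this out for a covering pair is the whole difficulty; the covering hypothesis gives essentially no structure when $K$ is merely maximal, not normal, in $L$. (2) The endpoint $L=G$ does not ``drop out'': substituting $L=G$ into your inequality and using $\sum_{g\in G}1/\ord{g^G}=k(G)$, what must be shown is $|G|\,d(K,G)>k(G)$, which is literally $d(K,G)>d(G)$, the statement to be proved for that pair; the bound $d(K,G)\ge|Z(G)|/|K|$ is far too weak (for $K$ maximal in an extraspecial group of order $p^{2n+1}$ it equals $p^{1-2n}$, while $d(G)$ is roughly $1/p$). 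Only the case $K=Z(G)$ is genuinely trivial. (3) Where the paper does get strictness, it uses a different and complete mechanism rather than averaging: if $Z(G)\le H<K$ and $d(H,G)=d(K,G)$, choose a prime $p$ dividing $|\ol{G}|$ such that $[K:H]$ is not a power of $p$ (possible because $\ol{G}$ is not a $p$-group), and a nontrivial $p$-element $\ol{g}$; the prime-power-order hypothesis forces $\ol{C_K(g)}$ to be a $p$-group, so $|HC_K(g)|=|H|[C_K(g):C_H(g)]$ is $|H|$ times a power of $p$, hence different from $|K|$, contradicting $K=HC_K(g)$. This yields strictness for \emph{every} nested pair, not only coverings, but only under that hypothesis --- which is exactly why the general statement remains a conjecture. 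Finally, your closing claim is true but for a different reason than you give: when $l_M(G/Z(G))\le 4$ the conjecture holds because either $|\D(G)|\ge 6\ge l_M(G/Z(G))+1$ trivially, or $|\D(G)|\le 5$ and $G$ is among the groups classified in this paper and its predecessors, all of which satisfy the conjecture; your lemma plays no role there.
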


In this paper, we shall classify all finite groups with five relative commutativity degrees. We divide these groups into two families. For nilpotent groups, we have
\begin{theorem}\label{nilpotent}
Let $G$ be a finite nilpotent group. Then $|\D(G)|=5$ if and only if one of the following holds:
\begin{itemize}
\item[(i)]$|G/Z(G)|=p^3$ and $G$ has an abelian maximal subgroup. Then
\[\D(G)=\set{1,\frac{2p-1}{p^2},\frac{p^2+p-1}{p^3},\frac{3p-2}{p^3},\frac{2p^2-1}{p^4}}.\]
\item[(ii)]$|G/Z(G)|=p^4$ and $G$ has two conjugacy class sizes $1$ and $p^m$ for some fixed $m\in\{1,2,3\}$. Then
\[\D(G)=\set{1,\frac{p^m+p-1}{p^{m+1}},\frac{p^m+p^2-1}{p^{m+2}},\frac{p^m+p^3-1}{p^{m+3}},\frac{p^m+p^4-1}{p^{m+4}}}.\]
\end{itemize}
\end{theorem}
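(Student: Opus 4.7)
The plan is to reduce the nilpotent case to a classification of non-abelian $p$-groups and then analyse the problem via conjugacy class sizes. Writing $G = P_1\times\cdots\times P_k$ as the direct product of its Sylow subgroups, every subgroup decomposes compatibly, so $\D(G) = \{\prod_i d_i : d_i\in \D(P_i)\}$. The non-trivial elements of $\D(P_i)$ are rationals whose denominators (in lowest terms) are positive powers of $p_i$, so the product map on tuples is injective; consequently two non-abelian Sylow subgroups would already force $|\D(G)|\geq 9$. Hence at most one Sylow subgroup is non-abelian, giving $G = P\times A$ with $A$ abelian and $P$ a non-abelian $p$-group, and $\D(G) = \D(P)$.

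The key computational tool is
\[
d(H,G) = \frac{1}{|H|}\sum_{h\in H}\frac{1}{|G:C_G(h)|},
\]
so $d(H,G)$ is determined by the multiset of conjugacy class sizes of the elements of $H$. For Case (i), I would show that the abelian maximal subgroup $M$ satisfies $Z(G)<M$ with $|M/Z(G)|=p^2$, and that $C_G(m) = M$ for $m\in M\setminus Z(G)$, while $C_G(g) = \langle g, Z(G)\rangle$ for $g\notin M$ (any element of $C_G(g)\cap M$ would centralise $\langle M,g\rangle = G$ and therefore lie in $Z(G)$); hence the only non-trivial class sizes are $p$ and $p^2$. Parameterising subgroups by $s = |HZ(G)/Z(G)|$ and by whether $H\leq M$, direct enumeration yields the five displayed degrees. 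Notably, the value $(p^2+p-1)/p^3$ arises both from $H = M$ and from $H\not\leq M$ with $s = p$, which is why the answer has exactly five, rather than six, elements. For Case (ii), the uniqueness of the non-trivial class size $p^m$ collapses the formula to $d(H,G) = (p^m + s - 1)/(sp^m)$ with $s \in \{1, p, p^2, p^3, p^4\}$; every value of $s$ is realised by lifting a subgroup of $G/Z(G)$ of the appropriate order, and the five resulting fractions are strictly decreasing in $s$, hence distinct.

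For the converse, given a non-abelian $p$-group $P$ with $|\D(P)| = 5$, Theorems \ref{|D(G)|=3} and \ref{|D(G)|=4} reduce matters to the sub-cases $|P/Z(P)| = p^3$ with an abelian maximal subgroup (yielding (i)), $|P/Z(P)| = p^4$, or $|P/Z(P)|\geq p^5$. To rule out $|P/Z(P)|\geq p^5$, I would lift a subgroup chain of length $5$ from $P/Z(P)$ to $Z(P) = H_0 < H_1 < \cdots < H_5$ with $|H_i/Z(P)| = p^i$, and then use a $p$-adic valuation argument on the denominators of $d(H_i, P)$ to force the six values to be pairwise distinct. For $|P/Z(P)| = p^4$ with more than one non-trivial class size, I would select two cyclic-mod-centre test subgroups $\langle g, Z(P)\rangle$ and $\langle g', Z(P)\rangle$ of the same $s = p$ but with $g, g'$ of different class sizes, producing two distinct values of $d(H, P)$ at $s = p$ and thereby a sixth degree beyond those permitted by (ii). The hardest step will be this last one: verifying that any heterogeneity in the non-trivial class sizes of $P$ \emph{genuinely} produces an extra relative commutativity degree that does not coincidentally equal one of the permissible five. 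This will require delicate arithmetic on the fractions $(p^m + s - 1)/(sp^m)$ across varying $(s, m)$ profiles, combined with a careful analysis of which mixtures of class sizes can occur in a subgroup $H$ of fixed $s$.
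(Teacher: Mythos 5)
Your two ``if'' directions are essentially the paper's own argument: the paper proves them as Lemma \ref{abelian maximal subgroup} (with $n=3$) and Lemma \ref{two conjugacy class sizes} (with $n=4$), and your observation that the single coincidence $(p^2+p-1)/p^3$ (attained both by $M$ and by the subgroups $H\not\leq M$ with $|\ol{H}|=p$) is what cuts six values down to five is exactly right. The Sylow reduction is also the paper's first step, but your justification overreaches: you claim the product map $\D(P_1)\times\cdots\times\D(P_k)\to\D(G)$ is injective because denominators are powers of distinct primes. That inference is invalid (the numerator of one factor can cancel against the denominator of another), and indeed multiplicativity $|\D(H\times K)|=|\D(H)||\D(K)|$ for coprime orders is stated in Section 2 of the paper as an \emph{open conjecture}. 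What is actually available is the additive bound $|\D(H\times K)|\geq|\D(H)|+|\D(K)|$, which follows from Lemma \ref{D(HxK)} since $\D(H)\cap\D(K)=\set{1}$ and $d(H)d(K)$ lies below every element of $\D(H)\cup\D(K)$; as $3+3=6>5$ this weaker bound suffices, so this defect is repairable.

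The genuine gaps are both in the ``only if'' half. First, to exclude $|\ol{G}|\geq p^5$ you propose lifting a chain $Z(G)=H_0<\cdots<H_5$ and proving the six values $d(H_i,G)$ pairwise distinct by ``a $p$-adic valuation argument on the denominators.'' No valuation statistic can do this: distinctness of the values is precisely what must be proved, and already in case (i) the two \emph{distinct} degrees $(p^2+p-1)/p^3$ and $(3p-2)/p^3$ occur along a single chain (take $Z(G)<H_1<H_2<G$ with $H_1\not\leq M$) and have the same reduced denominator $p^3$ for odd $p$. What is needed is strictness in the inequality of Lemma \ref{d(K,G)<=d(H,G)} along a chain, i.e.\ essentially Conjecture \ref{|D(G)|>=Omega(G/Z(G))+1?} for $p$-groups; the paper does not prove this either, but imports it as the published result \cite[Theorem 2.3]{mfdg-hs}. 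Your proposal has no valid substitute. Second, and more seriously, the case $|\ol{G}|=p^4$ with at least three conjugacy class sizes --- which is where virtually all of the paper's proof lies --- is absent. Two elements $g,g'$ of different class sizes give two distinct degrees at $s=p$, but together with $1$ and $d(G)$ that is only four values, not the six needed for a contradiction; the whole difficulty is ruling out coincidences such as $d(\gen{y},G)=d(\gen{x},G)$ among the remaining subgroups. The paper does this by first excluding abelian maximal subgroups (Lemma \ref{abelian maximal subgroup} would force $|\D(G)|=7$), then splitting on $\exp(\ol{G})=p^2$ versus $\exp(\ol{G})=p$, carrying out a case-by-case centralizer analysis via Lemmas \ref{d(K,G)<=d(H,G)}, \ref{HnonabelianKabelian} and \ref{CHgABH}, and in the exponent-$p^2$ case concluding with the classification of $p$-groups having a unique subgroup of order $p$ (forcing $\ol{G}\cong Q_{16}$ and a final contradiction). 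You explicitly defer this step to ``delicate arithmetic,'' so as it stands the proposal proves only the easy direction of the theorem.
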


The second family contains non-nilpotent groups and described as follows:
\begin{theorem}\label{non-nilpotent}
Let $G$ be a finite non-nilpotent group. Then $|\D(G)|=5$ if and only if one of the following holds:
\begin{itemize}
\item[(i)]$G/Z(G)\cong C_p\rtimes C_{q^2}$ is a Frobenius group. Then
\[\D(G)=\set{1,\ \frac{p+q-1}{pq},\ \frac{p+q^2-1}{pq^2},\ \frac{p+q^3-1}{pq^3},\ \frac{p+q^4-1}{pq^4}}.\]
\item[(ii)]$G/Z(G)\cong C_{p^2}\rtimes C_q$ is a Frobenius group. Then 
\[\D(G)=\set{1,\ \frac{p+q-1}{pq},\ \frac{p^2+q-1}{p^2q},\ \frac{p^2+q^2+pq-p-q}{p^2q^2},\ \frac{p^2+q^2-1}{p^2q^2}}.\]
\item[(iii)]$G/Z(G)\cong(C_p\times C_p)\rtimes C_q$ is a Frobenius group with a normal subgroup of order $p$ and the Sylow $p$-subgroup of $G$ is abelian. Then 
\[\D(G)=\set{1,\ \frac{p+q-1}{pq},\ \frac{p^2+q-1}{p^2q},\ \frac{p^2+q^2+pq-p-q}{p^2q^2},\ \frac{p^2+q^2-1}{p^2q^2}}.\]
\item[(iv)]$G/Z(G)\cong (C_p\times C_p)\rtimes C_q$ is a minimal Frobenius group and the Sylow $p$-subgroup of $G$ is non-abelian. Then, either $G/Z(G)\cong A_4$ for which
\[\D(G)=\set{1,\ \frac{7}{12},\ \frac{1}{2},\ \frac{3}{8},\ \frac{7}{24}},\]
or $p>q$ and 
\[\D(G)=\set{1,\ \frac{p^2+q-1}{p^2q},\ \frac{pq+p-1}{p^2q},\ \frac{pq+p^2-1}{p^3q},\ \frac{p^2q+p^2-1}{p^4q}}.\]
\item[(v)]$G/Z(G)\cong (C_p\times C_p\times C_p)\rtimes C_q$ is a minimal Frobenius group and the Sylow $p$-subgroup of $G$ is abelian. Then
\[\D(G)=\set{1,\ \frac{p+q-1}{pq},\ \frac{p^2+q-1}{p^2q},\ \frac{p^3+q-1}{p^3q},\ \frac{p^3+q^2-1}{p^3q^2}}.\]
\end{itemize}
Here, $p$ and $q$ denote distinct primes.
\end{theorem}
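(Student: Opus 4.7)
The plan is to parallel the strategy of Theorems \ref{|D(G)|=3} and \ref{|D(G)|=4}, working almost exclusively with the quotient $\bar G := G/Z(G)$, which is non-nilpotent since $G$ is (recall that $G$ is nilpotent iff $G/Z(G)$ is). The central tool is the formula
\[d(H,G)=\frac{1}{|H||G|}\sum_{h\in H}|C_G(h)|,\]
combined with the monotonicity chain $d(G)\leq d(K,G)\leq d(H,G)$ whenever $H\leq K$, which turns any strictly ascending chain of subgroups of $\bar G$ into distinct elements of $\D(G)$. Hence $|\bar G|$ has at most four prime factors, in agreement with Conjecture \ref{|D(G)|>=Omega(G/Z(G))+1?}.

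First I would show that $\bar G$ must be a Frobenius group of one of the listed shapes. Since $\bar G$ is a non-nilpotent group of order a product of at most four primes, Sylow theory leaves only a short list of candidates. Potential quotients of order $pqr$ with three distinct primes, or isomorphic to $S_4$, $S_3\times C_p$, or similar non-Frobenius semidirect products, each yield a sixth value in $\D(G)$ via a sufficiently long chain of intermediate subgroups and hence can be ruled out. The surviving candidates are precisely $C_p\rtimes C_{q^2}$, $C_{p^2}\rtimes C_q$, $(C_p\times C_p)\rtimes C_q$ (with the Sylow $p$-subgroup of $G$ either abelian admitting a normal subgroup of order $p^2$, or non-abelian), and $C_p^3\rtimes C_q$.

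For the forward direction, in each case I would lift every subgroup $\bar H\leq\bar G$ to $H=\pi^{-1}(\bar H)$ with $\pi\colon G\to\bar G$ the natural projection, and compute $d(H,G)$ by summing the sizes of the $G$-conjugacy classes meeting $H$. The Frobenius structure of $\bar G$ makes centralizers transparent: for $x\in G$ with $\pi(x)$ in the kernel, $C_G(x)$ lies in the preimage of the kernel, while for $\pi(x)$ in a complement, $C_G(x)/Z(G)$ is contained in that complement. Each $d(H,G)$ therefore collapses to an elementary expression in $p$, $q$, and $|Z(G)|$, producing the listed five-element sets. Case (iv) splits into a generic subcase and the exceptional subcase $\bar G\cong A_4$, where the non-abelian Sylow $2$-subgroup of $G$ forces the explicit constants $1,\frac{7}{12},\frac12,\frac38,\frac{7}{24}$.

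The main obstacle I anticipate is ensuring that these counts collapse to exactly five distinct values rather than six. Cases (ii) and (iii) are particularly delicate because two structurally different hypotheses must produce the \emph{same} set $\D(G)$, so one must verify carefully that the middle term $\frac{p^2+q^2+pq-p-q}{p^2q^2}$ arises from two different subgroup chains and that no additional value appears. Similarly, the converse requires excluding near-miss candidates such as non-Frobenius groups of order $p^2q^2$ or Frobenius groups in which the Sylow $p$-subgroup of $G$ has a different structure, by showing each forces a sixth value in $\D(G)$; this is where the bulk of the technical work lies.
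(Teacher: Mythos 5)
Your proposal rests on the claim that the monotonicity relation $d(K,G)\leq d(H,G)$ for $H\leq K$ ``turns any strictly ascending chain of subgroups of $\ol{G}$ into distinct elements of $\D(G)$,'' and everything downstream (the bound on the number of prime factors of $|\ol{G}|$, the elimination of non-Frobenius candidates) is derived from it. This claim is not available: Lemma \ref{d(K,G)<=d(H,G)} gives only non-strict monotonicity, with equality holding precisely when $K=HC_K(g)$ for all $g\in G$, and nothing rules such equalities out along a chain of subgroups containing $Z(G)$. Indeed, strictness along maximal chains is essentially Conjecture \ref{|D(G)|>=l_M(G/Z(G))+1?}, which the paper states as \emph{open}; it is proved (Lemma \ref{|D(G)|>=Omega(G/Z(G))+1}) only under the extra hypothesis that every nontrivial element of $\ol{G}$ has prime power order. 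Establishing that hypothesis when $|\D(G)|=5$ is the technical heart of the paper's argument: Lemma \ref{G/Z(G) has prime power order elements} eliminates elements of order $pqr$ and $pq$ in $\ol{G}$ through a long analysis (explicit degree formulas, Lemma \ref{xHCGx}, Schur--Zassenhaus, arithmetic contradictions such as $r\mid q-1$ versus $q\mid r-1$). Your proposal contains nothing playing this role; the mixed-order elements, which are exactly where the difficulty lies, are silently assumed away.

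Moreover, even if strictness along chains were granted, your exclusion of quotients with three or more prime divisors would still fail: a group of order $pqr$ has subgroup chains of length at most three, so a chain argument produces at most four distinct values in $\D(G)$ --- perfectly compatible with $|\D(G)|=5$, not ``a sixth value.'' The same applies to orders $pqrs$ or $p^2qr$. The paper instead shows that elements $\ol{a},\ol{b},\ol{c}$ of distinct prime orders have pairwise distinct degrees (Lemma \ref{d(<x>,G)<>d(<y>,G)}, which needs the prime-power centralizer structure), so that $d(G)<d(\gen{a},G)<d(\gen{b},G)<d(\gen{c},G)<1$ already exhausts $\D(G)$; this forces $\gen{\ol{a}}$ to be maximal, hence $\ol{G}$ Frobenius with kernel and complement of prime power order, contradicting the presence of three primes. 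Likewise, the later distinctions --- excluding $|\ol{G}|=p^2q^2$, splitting on whether $N_G(Q)=Q$, and deciding when the Sylow $p$-subgroup of $G$ is abelian via the common conjugacy class size $p^c$ --- are settled in the paper by explicit comparison of candidate degree values, not by Sylow theory yielding ``a short list.'' Your sketch of the forward direction (lifting subgroups of $\ol{G}$ and computing $d(H,G)$ from the Frobenius structure) does match the paper in spirit, but without the two ingredients above the characterization direction does not get off the ground.
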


One observe that all groups $G$ with at most five relative commutativity degrees satisfy Conjecture \ref{|D(G)|>=Omega(G/Z(G))+1?}. In \cite{mfdg-hs} the authors show that all supersolvable groups admit Conjecture \ref{|D(G)|>=Omega(G/Z(G))+1?} too. Hence all these groups satisfy Conjecture \ref{|D(G)|>=l_M(G/Z(G))+1?}. As a partial result, in Lemma \ref{|D(G)|>=Omega(G/Z(G))+1}, we also show that all finite groups whose nontrivial elements of their central factor groups have prime power orders satisfy Conjecture \ref{|D(G)|>=l_M(G/Z(G))+1?}.
\section{Preliminary results}
In this section, we recall/prove a set of useful tools which we shall use frequently in our proofs. We note that $d(H,G)=d(HZ,G)$ for every subgroup $H$ and every central subgroup $Z$ of a finite group $G$. We shall use this fact without further citing.
\begin{lemma}[{\cite[Lemma 2.1]{rb-ae-mfdg}}]\label{d(K,G)<=d(H,G)}
Let $G$ be a finite group and $H,K$ be subgroups of $G$ such that $H\leq K$. Then $d(K,G)\leq d(H,G)$ with equality if and only if $K=HC_K(g)$ for all $g\in G$
\end{lemma}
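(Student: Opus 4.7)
The plan is to reduce the inequality to a pointwise comparison of centralizer sizes via double counting. Writing
\[d(H,G)=\frac{1}{|H||G|}\sum_{g\in G}|C_H(g)|,\qquad d(K,G)=\frac{1}{|K||G|}\sum_{g\in G}|C_K(g)|,\]
what needs to be shown is that $|H|\cdot|C_K(g)|\leq|K|\cdot|C_H(g)|$ for every $g\in G$; the lemma then follows by summing over $g$.

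The pointwise inequality is a direct application of the subgroup product formula. Since $H\leq K$, the centralizer $C_K(g)$ is a subgroup of $K$ and $C_H(g)=H\cap C_K(g)$. Hence the set $H\, C_K(g)$ is a subset of $K$ of cardinality
\[|H\, C_K(g)|=\frac{|H|\,|C_K(g)|}{|H\cap C_K(g)|}=\frac{|H|\,|C_K(g)|}{|C_H(g)|},\]
and the inclusion $H\, C_K(g)\subseteq K$ rearranges to $|H|\,|C_K(g)|\leq|K|\,|C_H(g)|$, as desired.

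For the equality statement, the summed inequality $d(K,G)\leq d(H,G)$ is an equality if and only if each term of the sum is an equality, i.e.\ $|H\, C_K(g)|=|K|$ for every $g\in G$. Since $H\, C_K(g)$ is already contained in $K$, this is equivalent to $K=H\, C_K(g)$ for all $g\in G$, which is the stated condition.

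No step looks genuinely hard; the only subtlety is to remember that $H\, C_K(g)$ need not be a subgroup, but that the product formula $|H\, L|=|H||L|/|H\cap L|$ nevertheless holds whenever $H$ and $L$ are subgroups of a common group, so the argument goes through without requiring normality.
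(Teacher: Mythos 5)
Your proof is correct. The paper does not prove this lemma itself but quotes it from \cite{rb-ae-mfdg}, and your argument --- reducing $d(K,G)\leq d(H,G)$ to the pointwise inequality $|H|\,|C_K(g)|\leq |K|\,|C_H(g)|$ via the product formula $|H\,C_K(g)|=|H|\,|C_K(g)|/|C_H(g)|$ together with the inclusion $H\,C_K(g)\subseteq K$, and then reading off equality termwise as $K=HC_K(g)$ for all $g\in G$ --- is exactly the standard proof given in that source, so there is nothing to add.
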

\begin{corollary}\label{d(<x>,G)<d(<x^p>,G)}
Let $G$ be a finite non-abelian group and $x\in G$. If the order of $xZ(G)$ in $G/Z(G)$ is divisible by a prime $p$, then $d(\gen{x},G)<d(\gen{x^p},G)$.
\end{corollary}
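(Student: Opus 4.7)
The natural starting point is Lemma \ref{d(K,G)<=d(H,G)} applied to the chain $\langle x^p\rangle\leq\langle x\rangle$: it immediately gives the weak inequality $d(\langle x\rangle,G)\leq d(\langle x^p\rangle,G)$, together with a clean equality condition, namely that $\langle x\rangle=\langle x^p\rangle C_{\langle x\rangle}(g)$ for every $g\in G$. So my plan is to assume equality and extract a contradiction from this factorization.

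Write $n=|x|$ and let $m$ denote the order of $xZ(G)$ in $G/Z(G)$; by hypothesis $p\mid m$, and since $m\mid n$ also $p\mid n$, so $\langle x^p\rangle$ is a maximal subgroup of $\langle x\rangle$ of index $p$. For each $g\in G$ the centralizer $C_{\langle x\rangle}(g)$ is a subgroup of the cyclic group $\langle x\rangle$, so I may write $C_{\langle x\rangle}(g)=\langle x^{d_g}\rangle$ with $d_g\mid n$. Then the product rule in cyclic groups gives
\[
\langle x\rangle=\langle x^p\rangle\langle x^{d_g}\rangle=\langle x^{\gcd(p,d_g)}\rangle,
\]
which forces $\gcd(p,d_g)=1$, i.e.\ $p\nmid d_g$, for every $g\in G$.

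Next I will exploit the central subgroup $Z(G)$: since $Z(G)\cap\langle x\rangle=\langle x^m\rangle$ and $Z(G)\leq C_G(g)$, we obtain $\langle x^m\rangle\leq C_{\langle x\rangle}(g)=\langle x^{d_g}\rangle$, hence $d_g\mid m$. Combining the two divisibility conditions $d_g\mid m$ and $p\nmid d_g$ and using $p\mid m$, a short arithmetic check (split $m=p^k m'$ with $p\nmid m'$, so $d_g\mid m'$, and $m'\mid m/p$) yields $d_g\mid m/p$. Therefore $x^{m/p}\in\langle x^{d_g}\rangle\leq C_G(g)$ for every $g\in G$, so $x^{m/p}\in Z(G)$. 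This contradicts $|xZ(G)|=m$, since $0<m/p<m$.

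The whole argument is essentially forced by Lemma \ref{d(K,G)<=d(H,G)}, and the only step needing a touch of care is the divisibility juggling in the last paragraph; I don't anticipate any real obstacle beyond keeping track of the exponents $p,m,n,d_g$ correctly.
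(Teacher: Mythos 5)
Your proof is correct and follows exactly the route the paper intends: the corollary is stated as an immediate consequence of Lemma \ref{d(K,G)<=d(H,G)}, and your argument is precisely the expected derivation, using the equality condition $\gen{x}=\gen{x^p}C_{\gen{x}}(g)$ together with the structure of subgroups of the cyclic group $\gen{x}$ to force a power $x^{m/p}$ into $Z(G)$, contradicting the order of $xZ(G)$. The divisibility bookkeeping ($d_g\mid m$, $p\nmid d_g$, hence $d_g\mid m/p$) is sound, so there is no gap.
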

\begin{lemma}[{\cite[Lemma 2.1]{mfdg-hs}}]\label{HnonabelianKabelian}
Let $G$ be a finite non-abelian group. If $K\leq G$ is non-abelian and $H\leq K$ is abelian, then $d(K,G)<d(H,G)$.
\end{lemma}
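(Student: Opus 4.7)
By Lemma~\ref{d(K,G)<=d(H,G)} we already have $d(K,G)\leq d(H,G)$, with equality if and only if $K=HC_K(g)$ for every $g\in G$. So the task reduces to exhibiting a single $g\in G$ for which $K\neq HC_K(g)$, under the standing hypotheses that $H$ is abelian and $K$ is non-abelian. My plan is to argue by contradiction: assume $K=HC_K(g)$ for every $g\in G$, and extract information by specializing $g$ in two carefully chosen ranges.

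First, I would specialize to $g\in H$. Since $H$ is abelian, $H\leq C_G(g)$, hence $H\leq C_K(g)$ and $HC_K(g)=C_K(g)$. The standing identity then reduces to $K=C_K(g)$, i.e.\ $g\in Z(K)$. Letting $g$ range over $H$ yields $H\leq Z(K)$.

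Next, I would specialize to $g\in K$. Using the inclusion $H\leq Z(K)\leq C_K(g)$ just obtained, the identity $K=HC_K(g)$ again collapses to $K=C_K(g)$, so $g\in Z(K)$. Varying $g$ over $K$ gives $K=Z(K)$, contradicting the assumption that $K$ is non-abelian; this contradiction forces the strict inequality $d(K,G)<d(H,G)$. The approach has no real obstacle: the only observation required is to feed the condition $K=HC_K(g)$ first with elements of $H$ (to bootstrap $H\leq Z(K)$) and then with elements of $K$ (to collapse $K$ onto its centre), after which the argument is immediate.
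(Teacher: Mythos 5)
Your proof is correct. Note that this paper does not actually prove the lemma---it is quoted from \cite{mfdg-hs} (Lemma 2.1 there)---so there is no in-paper proof to compare against; your argument, which takes the equality criterion $K=HC_K(g)$ from Lemma~\ref{d(K,G)<=d(H,G)} and specializes it first to $g\in H$ (yielding $H\leq Z(K)$) and then to $g\in K$ (forcing $K=Z(K)$, contradicting non-abelianness of $K$), is exactly the style of argument the paper uses where it does prove such statements, e.g.\ in Lemma~\ref{xHCGx}, where the condition $H=\gen{x}C_H(g)$ is specialized to $g\in C_G(x)$ to deduce $H\subseteq Z(C_G(x))$. So your proposal is a complete, self-contained proof in the spirit of the paper's own techniques.
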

\begin{lemma}[{\cite[Lemma 2.2]{mfdg-hs}}]\label{CHgABH}
Let $G$ be a finite non-abelian group, $H$ be a subgroup of $G$ and $g\in G\setminus C_G(H)$. If $A,B$ are subgroups of $H$ such that $C_H(g)\leq A<B\leq H$, then $d(B,G)<d(A,G)$.
\end{lemma}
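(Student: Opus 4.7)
The plan is to deduce this directly from Lemma \ref{d(K,G)<=d(H,G)}, which already gives the non-strict inequality $d(B,G) \leq d(A,G)$ together with an equality condition, namely $B = A C_B(x)$ for every $x \in G$. So the whole task reduces to exhibiting a single element $x$ that violates this equality condition.

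The obvious candidate is the element $g$ furnished by the hypothesis. First I would compute $C_B(g)$: since $B \leq H$, we have $C_B(g) = C_H(g) \cap B$, and by hypothesis $C_H(g) \leq A$, hence $C_B(g) \leq A \cap B = A$ (using $A \leq B$). Therefore $A C_B(g) = A$, and since the hypothesis $A < B$ is strict, this differs from $B$. Applying Lemma \ref{d(K,G)<=d(H,G)} with the roles $H \mapsto A$ and $K \mapsto B$ then yields $d(B,G) < d(A,G)$.

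There is no real obstacle here; the lemma is essentially a packaged corollary of Lemma \ref{d(K,G)<=d(H,G)}. The only point requiring a moment's care is that the hypothesis provides $g \notin C_G(H)$ but we need information about $C_B(g)$; the containment $C_H(g) \leq A$ handles this uniformly without needing $g \notin C_G(B)$ as a separate assumption. Note also that the hypothesis $g \in G \setminus C_G(H)$ is not actually used in the argument beyond its implicit role in making $C_H(g)$ a proper centralizer; what the proof truly relies on is $C_H(g) \leq A < B$.
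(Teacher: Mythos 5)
Your proof is correct: applying the equality criterion of Lemma \ref{d(K,G)<=d(H,G)} with the pair $A\leq B$ and testing it at $x=g$ gives $C_B(g)=B\cap C_H(g)\leq A$, hence $AC_B(g)=A\neq B$, so the inequality must be strict. The paper itself offers no proof of this lemma (it is quoted from \cite[Lemma 2.2]{mfdg-hs}), but your derivation as an immediate corollary of Lemma \ref{d(K,G)<=d(H,G)} is the natural argument, and your side remark that the hypothesis $g\in G\setminus C_G(H)$ is already forced by $C_H(g)\leq A<B\leq H$ is also accurate.
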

\begin{lemma}\label{xHCGx}
Let $G$ be a finite group and $x\in G$. If $C_G(x)$ is non-abelian and $H$ is an abelian subgroup of $G$ such that $\gen{x}\subset H\subset C_G(x)$ and $H\not\subseteq Z(C_G(x))$, then 
\[d(C_G(x),G)<d(H,G)<d(\gen{x},G).\]
\end{lemma}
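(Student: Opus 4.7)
The plan is to deduce the two strict inequalities directly from the three lemmas already proved in this section, applied to the chain $\gen{x}\leq H\leq C_G(x)$ of subgroups of $G$. First I would invoke Lemma \ref{d(K,G)<=d(H,G)} twice along this chain to obtain the weak inequalities $d(C_G(x),G)\leq d(H,G)\leq d(\gen{x},G)$, so that the whole task reduces to upgrading each of these to a strict inequality.

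For the left inequality $d(C_G(x),G)<d(H,G)$, the hypotheses fit Lemma \ref{HnonabelianKabelian} exactly: the ambient group $G$ is non-abelian (it contains the non-abelian subgroup $C_G(x)$), and $H$ is an abelian subgroup of the non-abelian subgroup $C_G(x)$, so that lemma applied with $K=C_G(x)$ yields strictness immediately.

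For the right inequality $d(H,G)<d(\gen{x},G)$, I would use the equality clause of Lemma \ref{d(K,G)<=d(H,G)}, which says that equality here would force $H=\gen{x}C_H(g)$ for every $g\in G$. I would then exhibit a single $g$ that violates this. The hypothesis $H\not\subseteq Z(C_G(x))$ yields some $h\in H$ and some $y\in C_G(x)$ with $hy\ne yh$. Since $y\in C_G(x)$, the cyclic subgroup $\gen{x}$ lies inside $C_H(y)$, whereas $h\in H\setminus C_H(y)$ shows $C_H(y)\subsetneq H$. Therefore $\gen{x}C_H(y)=C_H(y)\subsetneq H$, contradicting the equality condition for $g=y$, and the strict inequality follows.

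There is no real technical obstacle; the only conceptual point is recognizing that the hypothesis $H\not\subseteq Z(C_G(x))$ is precisely what is needed to produce a witness $y$ that simultaneously centralizes $x$ (so that $C_H(y)$ still contains $\gen{x}$) and fails to centralize some element of $H$ (so that $C_H(y)$ is strictly smaller than $H$). One could alternatively package the right inequality through Lemma \ref{CHgABH} applied to $A=C_H(y)$, $B=H$ and then use $\gen{x}\leq C_H(y)$, but the direct appeal to the equality clause of Lemma \ref{d(K,G)<=d(H,G)} is shorter.
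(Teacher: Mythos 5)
Your proposal is correct and follows essentially the same route as the paper: the left inequality via Lemma \ref{HnonabelianKabelian} applied to the abelian subgroup $H$ of the non-abelian $C_G(x)$, and the right inequality via the equality clause of Lemma \ref{d(K,G)<=d(H,G)}, using an element of $C_G(x)$ that fails to centralize $H$ (your witness $y$ is just the contrapositive phrasing of the paper's deduction that equality would force $H\subseteq Z(C_G(x))$).
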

\begin{proof}
By Lemma \ref{HnonabelianKabelian}, $d(C_G(x),G)<d(H,G)$. If $d(H,G)=d(\gen{x},G)$, then $H=\gen{x}C_H(g)$ for all $g\in G$. If $g\in C_G(x)$, then $x\in C_H(g)$ and $H=C_H(g)$, from which it follows that $H\subseteq Z(C_G(x))$, a contradiction. Thus $d(H,G)<d(\gen{x},G)$.
\end{proof}

We shall use the following result to reduce the nilpotent groups under consideration to the class of $p$-groups.
\begin{lemma}\label{D(HxK)}
Let $H$ and $K$ be two finite groups with coprime orders. Then
\begin{itemize}
\item[(1)]$\D(H\times K)=\D(H)\D(K)$ is the set of products of elements of $\D(H)$ by elements of $\D(K)$;
\item[(2)]$\D(H)\cap\D(K)=\{1\}$.
\end{itemize}
\end{lemma}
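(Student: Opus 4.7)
The plan is to prove (1) and (2) separately.

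For (1), the key first step is a structural reduction: when $\gcd(|H|,|K|)=1$, every subgroup $L\leq H\times K$ splits as $L = L_H\times L_K$, where $L_H := L\cap(H\times\{1\})$ and $L_K := L\cap(\{1\}\times K)$. Given $(h,k)\in L$, I raise to the power $|K|$ to obtain $(h^{|K|},1)\in L$, and since $|K|$ is coprime to the order of $h$, a further power recovers $(h,1)\in L$; symmetrically $(1,k)\in L$, so $(h,k)\in L_H\times L_K$. Once this decomposition is in hand, I use that centralizers factor in a direct product, $C_{H\times K}((h,k)) = C_H(h)\times C_K(k)$, which gives
\[
\sum_{(h,k)\in L_H\times L_K}\bigl|C_{H\times K}((h,k))\bigr| \;=\; \Bigl(\sum_{h\in L_H}|C_H(h)|\Bigr)\Bigl(\sum_{k\in L_K}|C_K(k)|\Bigr).
\]
Dividing by $|L|\cdot|H\times K|=|L_H||H|\cdot|L_K||K|$ yields $d(L,H\times K)=d(L_H,H)\,d(L_K,K)$, proving $\D(H\times K)\subseteq\D(H)\D(K)$. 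The reverse inclusion is immediate by realizing any product via the subgroup $L_H\times L_K$.

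For (2), I would use an integrality observation. For any subgroup $L\leq G$, the quantity $d(L,G)\cdot|L|\cdot|G|$ counts commuting pairs and is an integer, and since $|L|$ divides $|G|$, the quantity $|G|^2\cdot d(L,G)$ is also an integer. If $r\in\D(H)\cap\D(K)$, then both $|H|^2 r$ and $|K|^2 r$ lie in $\Z$, and since $\gcd(|H|^2,|K|^2)=1$, Bezout yields $r\in\Z$. Combined with $0<r\leq 1$, this forces $r=1$.

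The only step requiring any care is the subgroup decomposition in (1), which is really a special case of Goursat's lemma but is made elementary here by coprimality via the power trick above; the remaining computations in both parts are routine.
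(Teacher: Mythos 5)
Your proof is correct. Note that the paper states this lemma without any proof at all (it is treated as a routine fact), so there is no argument of the author's to compare against; your write-up supplies exactly what is missing. The one step that genuinely needs care --- that coprimality forces every subgroup $L\leq H\times K$ to split as $L=(L\cap H)\times(L\cap K)$ --- is handled correctly by the power trick: $(h,k)^{|K|}=(h^{|K|},1)\in L$ and $\gcd(|K|,\mathrm{ord}(h))=1$ recovers $(h,1)\in L$, so Goursat's lemma is indeed unnecessary. From there the factorization $C_{H\times K}((h,k))=C_H(h)\times C_K(k)$ gives $d(L,H\times K)=d(L\cap H,H)\,d(L\cap K,K)$, and both inclusions of (1) follow. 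Your argument for (2) is also sound and pleasantly short: $|G|^2 d(L,G)=[G:L]\cdot\bigl(|L||G|\,d(L,G)\bigr)$ is an integer, so any common value $r\in\D(H)\cap\D(K)$ satisfies $|H|^2r,|K|^2r\in\Z$, and Bezout together with $0<r\leq1$ forces $r=1$; since the trivial subgroup always contributes $1$ to both sets, the intersection is exactly $\{1\}$.
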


While the above lemma gives the simple lower bound 
\[|\D(H\times K)|\geq|\D(H)|+|\D(K)|\]
for $|\D(H\times K)|$ when $H$ and $K$ are finite groups with coprime orders, we believe that a more stronger result should holds for any two such groups.
\begin{conjecture}
For any two finite groups $H$ and $K$ of coprime orders, we have
\[|\D(H\times K)|=|\D(H)|\times|\D(K)|.\]
\end{conjecture}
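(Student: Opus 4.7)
The plan is to prove that the surjection $\mu:\D(H)\times\D(K)\to\D(H\times K)$, $(d_1,d_2)\mapsto d_1d_2$, provided by Lemma~\ref{D(HxK)}(1), is also injective. Set $P=\pi(|H|)$ and $Q=\pi(|K|)$, so that $P\cap Q=\emptyset$ by coprimality. Each $d_1\in\D(H)$ can be written as $d_1=k_H(H_1)/|H_1|$ for some $H_1\leq H$, where $k_H(H_1)=|H_1|d(H_1,H)$ is (by Burnside's lemma applied to the conjugation action of $H$ on $H_1$) the number of $H$-conjugacy classes meeting $H_1$. In particular, the reduced denominator of any $d_1\in\D(H)$ divides $|H_1|$ and is hence a $P$-number; the analogous statement holds for $\D(K)$, and Lemma~\ref{D(HxK)}(2) is already built into this observation.

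The strategy is to analyze a supposed coincidence $d_1d_2=d_1'd_2'$ one prime at a time. Setting $k_1=k_H(H_1)$, $n_1=|H_1|$, $l_1=k_K(K_1)$, $m_1=|K_1|$ and their primed analogues, the identity reads
\[
k_1\,l_1\,n_1'\,m_1' \;=\; k_1'\,l_1'\,n_1\,m_1.
\]
For every positive integer $N$ write $N=N^P N^Q N^R$ for its decomposition along the three disjoint prime sets, with $R$ the complement of $P\cup Q$. Since $n_i=n_i^P$ and $m_j=m_j^Q$, comparing $P$-, $Q$- and $R$-parts yields
\begin{align*}
k_1^P l_1^P n_1' &= k_1'^P l_1'^P n_1,\\
k_1^Q l_1^Q m_1' &= k_1'^Q l_1'^Q m_1,\\
k_1^R l_1^R &= k_1'^R l_1'^R.
\end{align*}
The target conclusions $d_1=d_1'$ and $d_2=d_2'$ are equivalent to $k_1n_1'=k_1'n_1$ and $l_1m_1'=l_1'm_1$, which split into six identities $k_1^P n_1'=k_1'^P n_1$, $k_1^Q=k_1'^Q$, $k_1^R=k_1'^R$, $l_1^P=l_1'^P$, $l_1^Q m_1'=l_1'^Q m_1$, $l_1^R=l_1'^R$. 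Crucially, were $k_H(H_1)$ always a $P$-number and $k_K(K_1)$ always a $Q$-number, the three displayed equations would specialise to exactly the first and the fifth of these six, and the remaining four would be automatic.

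The main obstacle is that ``stray'' prime factors of $k_H(H_1)$ outside $P$ do occur: already $k_{D_8}(D_8)=k(D_8)=5$ with $5\notin\{2\}=\pi(|D_8|)$ forces $d(D_8,D_8)=5/8$ to carry a numerator prime outside $\pi(|H|)$. To handle this, I would induct on $|H|+|K|$, pick a minimal counterexample, and apply Lemmas~\ref{d(K,G)<=d(H,G)} and~\ref{CHgABH} to reduce to the case where $H_1,H_1'$ (respectively $K_1,K_1'$) are maximal among subgroups realising their $d$-values. The hardest step, which I expect to be the genuine technical hurdle, is then to rule out accidental multiplicative cancellations between the $Q\cup R$-part of the ratio $k_H(H_1)/k_H(H_1')$ and the $P\cup R$-part of $k_K(K_1')/k_K(K_1)$. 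Here one would appeal to the structural results underlying Conjecture~\ref{|D(G)|>=Omega(G/Z(G))+1?}, which constrain $|H_1/(H_1\cap Z(H))|$ in terms of the position of $d(H_1,H)$ in $\D(H)$, in the hope that the resulting rigidity is strong enough to preclude such coincidences.
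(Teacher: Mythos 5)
You are attempting to prove something the paper does not prove: this statement is an open \emph{conjecture} there, so there is no paper proof to compare against. Indeed, immediately after stating it the paper stresses that the coprimality hypothesis cannot be dropped, exhibiting the non-coprime pairs $(A_4,S_4)$, $(S_4,S_4)$, $(S_5,S_5)$ for which $|\D(H\times K)|-|\D(H)||\D(K)|$ equals $-3$, $-17$, $+24$ respectively; the coprime case is left unresolved. The only question, then, is whether your argument stands on its own, and it does not. Your reduction is sound as far as it goes: by Lemma \ref{D(HxK)}(1) the multiplication map $\D(H)\times\D(K)\to\D(H\times K)$ is surjective, so the conjecture is exactly its injectivity, and a prime-splitting analysis of a supposed coincidence $d_1d_2=d_1'd_2'$ is a reasonable frame. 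But the step that would actually prove injectivity --- excluding multiplicative cancellation between the ``stray'' prime parts of the numerators coming from $H$ and from $K$ --- is precisely where you stop, saying you \emph{hope} the rigidity behind Conjecture \ref{|D(G)|>=Omega(G/Z(G))+1?} suffices. That conjecture is itself unproven (the paper verifies it only for groups with at most five relative commutativity degrees and, citing \cite{mfdg-hs}, for supersolvable groups), so even if the hoped-for implication were carried out, your argument would be conditional on an open problem. As written, this is a plan with its central step missing, not a proof.

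There is also a concrete error in your setup. You define $k_H(H_1)=|H_1|\,d(H_1,H)$ and assert, via Burnside's lemma, that it is the number of $H$-conjugacy classes meeting $H_1$. This is false: Burnside's lemma, applied to $H_1$ acting on $H$ by conjugation, shows that $|H|\,d(H_1,H)$ is the number of $H_1$-orbits on $H$; the quantity $|H_1|\,d(H_1,H)$ need not even be an integer. For $H=S_3$ and $H_1$ of order $2$ one has $d(H_1,H)=\frac{6+2}{2\cdot 6}=\frac{2}{3}$, so $|H_1|\,d(H_1,H)=\frac{4}{3}$, and the reduced denominator $3$ does not divide $|H_1|=2$, contradicting your claim that reduced denominators divide $|H_1|$. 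What is true, and is all your three-way splitting actually needs, is that the reduced denominator of any element of $\D(H)$ divides $|H_1||H|$ and hence is a $\pi(|H|)$-number; but your displayed identities must then be rederived with $k_1$ taken to be the numerator in lowest terms (or the orbit count $|H|\,d(H_1,H)$), and in any case this repair does nothing to close the main gap above.
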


Notice that the above conjecture is not valid in general. The smallest counter-example is $(A_4,S_4)$ for which $|\D(A_4\times S_4)|=|\D(A_4)||\D(S_4)|-3$. Two rather paradoxical examples are
\[|\D(S_4\times S_4)|=|\D(S_4)|^2-17\ \text{and}\ |\D(S_5\times S_5)|=|\D(S_5)|^2+24\]
showing that not only the difference between $|\D(H\times K)|$ and $|\D(H)||\D(K)|$ can be large but also they cannot be compared in general. So, we may ask the following question:
\begin{question}
How the difference
\[|\D(H\times K)|-|\D(H)||\D(K)|\]
grows with respect to $|\D(H)|$ and $|\D(K)|$?
\end{question}

All over this paper, $G$ denotes the group we are working on and $\ol{G}$ stand for the factor group $G/Z(G)$. Accordingly, for a subgroup $H$ of $G$ and element $g\in G$, $\ol{H}$ and $\ol{g}$ stand for $HZ(G)/Z(G)$ and $gZ(G)$.
\section{Nilpotent groups}
Our classification of finite nilpotent groups with four relative commutativity degrees relies on two classes of groups; $p$-groups with an abelian maximal subgroup and $p$-groups whose all non-central elements have the same conjugacy class sizes. In both cases, we can simply compute all relative commutativity degrees and count them.
\begin{lemma}\label{abelian maximal subgroup}
Let $G$ be a non-abelian finite $p$-group with an abelian maximal subgroup. If $|G/Z(G)|=p^n$, then $|\D(G)|=2n-1$ and
\[\D(G)=\set{\frac{p^i+p-1}{p^{i+1}},\ \frac{p^{j-1}+p-1}{p^{j+1}}+\frac{p-1}{p^n}\ :\ 0\leq i<n,\ 1<j\leq n}.\]
\end{lemma}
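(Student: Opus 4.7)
After the reduction $d(H,G)=d(HZ(G),G)$ we may restrict to subgroups $H$ with $Z(G)\leq H$, and parametrize such $H$ by the exponent $j$ defined by $|H/Z(G)|=p^j$ together with the dichotomy $H\leq A$ versus $H\not\leq A$. The two structural facts that drive every computation are: for $h\in A\setminus Z(G)$ one has $C_G(h)=A$ (since $A$ is abelian of index $p$); and for $h\in G\setminus A$ one has $G=\langle h\rangle A$, whence $C_G(h)=\langle h\rangle C_A(h)$ has order $p|C_A(h)|$.

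When $H\leq A$, splitting $\sum_{h\in H}|C_G(h)|=|Z(G)||G|+(|H|-|Z(G)|)|A|$ and dividing by $|H||G|$ yields
\[d(H,G)=\frac{p^i+p-1}{p^{i+1}},\qquad p^i=|H/Z(G)|,\]
and these values occur for every $0\leq i\leq n-1$ because the abelian $p$-group $\ol A$ of order $p^{n-1}$ admits subgroups of each intermediate order. When $H\not\leq A$, so that $HA=G$ and $[H:H\cap A]=p$, I partition $H$ as $Z(G)\sqcup(H\cap A\setminus Z(G))\sqcup(H\setminus A)$; the first two pieces are handled as above, while for the third one double-counts commuting pairs in $(H\setminus A)\times A$ by summing $|C_H(a)|$ over $a\in A$ (using $C_G(a)=A$ for non-central $a\in A$) and subtracting the contribution from $h\in H\cap A$. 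This produces $\sum_{h\in H\setminus A}|C_A(h)|=|Z(G)||H|(p-1)/p$, and collecting the three pieces yields
\[d(H,G)=\frac{p^{j-1}+p-1}{p^{j+1}}+\frac{p-1}{p^n},\qquad p^j=|H/Z(G)|.\]
A short algebraic identity shows that this second formula at $j=1$ coincides with the first at $i=n-1$, so genuinely new values only arise for $2\leq j\leq n$.

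To realize each of the $2n-1$ proposed degrees I need, for $0\leq i\leq n-1$, a subgroup of $\ol A$ of order $p^i$ (immediate from the $p$-group structure of $\ol A$), and for $2\leq j\leq n$, a subgroup of $\ol G$ of order $p^j$ not contained in $\ol A$. The case $j=n$ uses $H=G$; for $j=n-1$ one uses that $\ol G$ is non-cyclic (since $G/Z(G)$ cyclic would force $G$ abelian) to pick a maximal subgroup $\ol M\neq\ol A$; and for $2\leq j\leq n-2$ one iterates inside $\ol M$, with $\ol M$ playing the role of $\ol G$ and $\ol M\cap\ol A$ that of $\ol A$. I expect this existence step to be the main obstacle, since it requires ruling out configurations where every subgroup of some relevant order happens to lie inside $\ol A$; the recursion terminates by invoking the classification of non-cyclic $p$-groups whose maximal subgroups are all cyclic, the only such examples ($C_p\times C_p$ and $Q_8$) being either degenerate or falling outside the relevant range of $j$.

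Finally, distinctness of the $2n-1$ listed values is verified by observing that each family is strictly decreasing in its index (elementary calculation on the explicit formulas), and that a direct numerator comparison shows the two families to be disjoint, since the summand $(p-1)/p^n$ appearing in the case-two values cannot cancel into any case-one value.
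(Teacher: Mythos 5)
Your computational core is correct and is essentially the paper's own argument: the same reduction to subgroups containing $Z(G)$, the same centralizer facts ($C_G(h)=A$ for non-central $h\in A$, and $[C_G(h):C_A(h)]=p$ for $h\notin A$), the same two formulas, and the same single collision (the value at $j=1$ equals the value at $i=n-1$). Your double-counting of commuting pairs in $(H\setminus A)\times A$ is a nice touch: it sidesteps having to prove pointwise that $C_A(h)=Z(G)$ for $h\notin A$, which is what the paper uses directly, and it also makes the computation insensitive to whether the abelian maximal subgroup is unique.

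The one genuine flaw is in your existence step (producing, for each $2\leq j\leq n$, a subgroup of $\ol{G}$ of order $p^j$ not contained in $\ol{A}$), and ironically it is created by the very shortcut above. Your top-down recursion can get stuck: if at some stage it hands you a cyclic subgroup $\ol{H}\not\leq\ol{A}$ of order $p^{j}\geq p^3$, then $\ol{H}\cap\ol{A}$ (which has index $p$ in $\ol{H}$, since $\ol{A}$ is normal and maximal in $\ol{G}$) is the unique maximal subgroup of $\ol{H}$, so every subsequent choice lies inside $\ol{A}$ and the recursion dies. The classification you invoke ($C_p\times C_p$ and $Q_8$) concerns groups \emph{all} of whose maximal subgroups are cyclic, which is not the relevant failure mode; the failure mode is being forced to choose a cyclic maximal subgroup, as would happen, say, in a modular group $M_{p^{j+1}}$ whose unique non-cyclic maximal subgroup plays the role of $\ol{H}\cap\ol{A}$. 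What rules this configuration out here is exactly the fact your double-counting avoided establishing: for $h\in G\setminus A$ one has $h^p\in A$ (as $[G:A]=p$), and if $h^p\notin Z(G)$ then $A=C_G(h^p)\ni h$, a contradiction; hence every element of $\ol{G}\setminus\ol{A}$ has order $p$, so no cyclic subgroup of order at least $p^2$ can avoid $\ol{A}$, and in particular every subgroup you encounter in the recursion is non-cyclic. Simpler still, drop the top-down recursion entirely: pick $g\in G\setminus A$ and refine $\gen{Z(G),g}\leq G$ to a chain of subgroups realizing all intermediate orders (possible in any $p$-group); each member of the chain contains $g$, hence is not contained in $A$. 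With that repair your proof is complete and coincides with the paper's, which takes this existence step for granted without comment.
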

\begin{proof}
Let $M$ be the unique abelian maximal subgroup of $G$. Clearly, $C_G(g)=M$ for all $g\in M\setminus Z(G)$, and $C_G(g)=\gen{Z(G),g}$ has order $p|Z(G)|$ for all $g\in G\setminus M$. Let $H$ be a subgroup of $G$ containing $Z(G)$ and $|\ol{H}|=p^i$. If $H\subseteq M$, then $0\leq i<n$ and
\[d(H,G)=\frac{|Z(G)||G|+(|H|-|Z(G)|)|M|}{|H||G|}=\frac{p^i+p-1}{p^{i+1}}.\]
Also, if $H\not\subseteq M$, then $1\leq i\leq n$ and $|\ol{H\cap M}|=p^{i-1}$. Thus
\begin{align*}
d(H,G)&=\frac{|Z(G)||G|+(|H\cap M|-|Z(G)|)|M|+(|H|-|H\cap M|)p|Z(G)|}{|H||G|}\\
&=\frac{p^{i-1}+p-1}{p^{i+1}}+\frac{p-1}{p^n}.
\end{align*}
On the other hand, if 
\[\frac{p^i+p-1}{p^{i+1}}=\frac{p^{j-1}+p-1}{p^{j+1}}+\frac{p-1}{p^n},\]
for some $0\leq i<n$ and $1\leq j\leq n$, then a simple verification shows that $i=n-1$ and $j=1$. Therefore $|\D(G)|=2n-1$, as required.
\end{proof}
\begin{lemma}\label{two conjugacy class sizes}
Let $G$ be a finite $p$-group whose non-central elements have conjugacy classes of the same size $p^m$. If $|G/Z(G)|=p^n$, then $|\D(G)|=n+1$ and
\[\D(G)=\set{\frac{p^m+p^i-1}{p^{m+i}}\ :\ 0\leq i\leq n}.\]
\end{lemma}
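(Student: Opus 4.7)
The plan is to compute $d(H,G)$ explicitly as a function of the index $|HZ(G)/Z(G)|$ and to observe that this index realizes every power of $p$ up to $p^n$, yielding exactly $n+1$ distinct values.

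First, I would use the fact (recorded at the start of Section~2) that $d(H,G)=d(HZ(G),G)$, and so restrict attention to subgroups $H$ with $Z(G)\leq H$. Writing $|H/Z(G)|=p^i$ with $0\leq i\leq n$, I would expand
\[
d(H,G)=\frac{1}{|H||G|}\sum_{h\in H}|C_G(h)|
\]
by splitting the sum according to whether $h\in Z(G)$ or $h\in H\setminus Z(G)$. In the first case $|C_G(h)|=|G|$, and in the second the hypothesis on class sizes gives $|C_G(h)|=|G|/p^m$. Substituting $|Z(G)|=|G|/p^n$ and $|H|=p^i|Z(G)|$, this yields, after cancellation,
\[
d(H,G)=\frac{|Z(G)|\cdot|G|+(|H|-|Z(G)|)\cdot|G|/p^m}{|H||G|}=\frac{p^m+p^i-1}{p^{m+i}}.
\]

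Next, I would show that each of the $n+1$ candidate values is in fact attained. Since $\ol{G}=G/Z(G)$ is a $p$-group of order $p^n$, it has subgroups of every order $p^i$ for $0\leq i\leq n$; taking preimages in $G$ produces subgroups $H$ with $Z(G)\leq H$ and $|H/Z(G)|=p^i$, which by the formula above realize the value $(p^m+p^i-1)/p^{m+i}$.

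Finally, I would verify that the function $f(i)=(p^m+p^i-1)/p^{m+i}=p^{-i}+p^{-m}-p^{-m-i}$ is strictly decreasing in $i$, since
\[
f(i)-f(i+1)=(p^{-i}-p^{-i-1})-(p^{-m-i}-p^{-m-i-1})=\frac{p-1}{p^{i+1}}\Bigl(1-\frac{1}{p^m}\Bigr)>0,
\]
so the $n+1$ values are pairwise distinct. Combined with the first step, this shows $\D(G)$ is exactly the stated set and has cardinality $n+1$. There is no real obstacle here; the lemma is essentially a bookkeeping exercise once the two structural hypotheses (uniform non-central class size and all subgroup-indices available in $\ol{G}$) are combined.
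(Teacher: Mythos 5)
Your proof is correct and follows essentially the same route as the paper: the paper's entire argument is the same computation $d(H,G)=\bigl(|Z(G)||G|+(|H|-|Z(G)|)|G|/p^m\bigr)/(|H||G|)=(p^m+p^i-1)/p^{m+i}$ for subgroups $H\geq Z(G)$. Your additional steps (reduction via $d(H,G)=d(HZ(G),G)$, attainment of every index $p^i$ in a $p$-group, and strict monotonicity in $i$ to get pairwise distinctness) are exactly the details the paper leaves implicit, so nothing differs in substance.
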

\begin{proof}
Let $H$ be a subgroup of $G$ containing $Z(G)$. If $\ol{H}=p^i$, then
\[d(H,G)=\frac{|Z(G)||G|+(|H|-|Z(G)|)|G|/p^m}{|H||G|}=\frac{p^m+p^i-1}{p^{m+i}},\]
as required.
\end{proof}

In what follows, $Z$ and $Z_g$ stand for the subgroups $Z(G)$ and $\gen{g}\cap Z(G)$ of a group $G$ for all elements $g\in G$. Utilizing the above two lemmas, we can classify all nilpotent groups with five relative commutativity degrees. 
\newline\newline
\textbf{\textit{Proof of Theorem \ref{nilpotent}.} }
Suppose $|\D(G)|=5$. Let $G=P_1\times\cdots\times P_k$ be the factorization of $G$ into the direct product of Sylow $p$-subgroups. If $P_i$ and $P_j$ are non-abelian for some $i\neq j$, then the paragraph after Lemma \ref{D(HxK)} shows that $|\D(G)|\geq6$, which is a contradiction. Therefore $G/Z(G)$ is a $p$-group. By Theorem \ref{|D(G)|=3}(i) and \cite[Theorem 2.3]{mfdg-hs}, we must have $|G/Z(G)|=p^3$ or $p^4$. 

If $|G/Z(G)|=p^3$, then $G/Z(G)$ has an abelian maximal subgroup by Theorem \ref{|D(G)|=4}(i) so that Lemma \ref{abelian maximal subgroup} yields $G$ is a group of type (i) with $\D(G)$ as given in the theorem. 

Now, suppose that $|G/Z(G)|=p^4$. If $G$ has two conjugacy class sizes, then we apply Lemma \ref{two conjugacy class sizes} to show that $G$ is a group of type (ii) with $\D(G)$ as in the theorem. For the rest of the proof, we further assume that $G$ has at least three conjugacy class sizes. By Lemma \ref{abelian maximal subgroup}, $G$ has no abelian maximal subgroups. We have two cases to consider:

Case 1. $\exp(\ol{G})=p^2$. Let $\ol{x}\in\ol{G}$ be an element of order $p^2$. Clearly, $C_G(x)=\gen{Z(G),x}$ as $G$ has no abelian maximal subgroups. We show that $C_G(x^p)$ is a maximal subgroup of $G$. Suppose on the contrary that $C_G(x^p)=C_G(x)$. Let $M$ be a maximal subgroup of $G$ containing $x$. Since $d(C_G(x),G)=d(\gen{x},G)$, by Corollary  \ref{d(<x>,G)<d(<x^p>,G)} and Lemma \ref{CHgABH},
\[d(G)<d(M,G)<d(\gen{x},G)<d(\gen{x^p},G)<1.\]
If $g\in G$ is such that $|\ol{C_G(g)}|=p$, and $C_G(g)\subset M_1\subset M_2\subset G$ for some subgroups $M_1$ and $M_2$ of $G$, then Lemma \ref{CHgABH} along with the fact that $d(C_G(g),G)=d(\gen{g},G)$ yield
\[d(G)<d(M_2,G)<d(M_1,G)<d(\gen{g},G)<1.\]
Thus $d(\gen{g},G)=d(\gen{x^p},G)$, which implies that $|\ol{C_G(g)}|=|\ol{C_G(x^p)}|\geq p^2$, a contradiction. Hence $|\ol{C_G(g)}|\geq p^2$ for all $g\in G$. As a result, $G$ contains an element $y$ such that $C_G(y)$ is a maximal subgroup of $G$. Notice that $C_G(y)$ is non-abelian and hence we should have $|\ol{y}|=p$.  By Lemma \ref{HnonabelianKabelian},
\[d(G)<d(C_G(y),G)<d(\gen{y},G)<1.\]
Since $d(\gen{y},G)\neq d(\gen{x^p},G)$, it follows that $d(\gen{y},G)=d(\gen{x},G)$. On the other hand,
\[d(\gen{x},G)=\frac{|Z_x||G|+(|x|-|Z_x|)p^2|Z|}{|x||G|}=\frac{2p^2-1}{p^4}\]
and
\[d(\gen{y},G)=\frac{|Z_y||G|+(|y|-|Z_y|)p^3|Z|}{|y||G|}=\frac{2p-1}{p^3},\]
which imply that $d(\gen{y},G)\neq d(\gen{x},G)$, a contradiction. Therefore $C_G(x^p)$ is a maximal subgroup of $G$, and by Corollary \ref{d(<x>,G)<d(<x^p>,G)} and Lemma \ref{HnonabelianKabelian},
\[d(G)<d(C_G(x^p),G)<d(\gen{x},G)<d(\gen{x^p},G)<1.\]
Next we show that $C_G(g)$ is a maximal subgroup of $G$ for all $g\in G$ satisfying $|\ol{g}|=p$. To this end, let $g\in G$ be such that $|\ol{g}|=p$, $|\ol{C_G(g)}|=p^c\neq p^3$, and $M$ be a maximal subgroup of $G$ containing $C_G(g)$. Then Lemma \ref{CHgABH} yields
\[d(G)<d(M,G)<d(C_G(g),G)\leq d(\gen{g},G)<1.\]
Since $d(\gen{g},G)\neq d(\gen{x^p},G)$, it follows that $d(\gen{g},G)=d(\gen{x},G)$. On the other hand,
\[d(\gen{x},G)=\frac{|Z_x||G|+(|x^p|-|Z_x|)p^3|Z|+(|x|-|x^p|)p^2|Z|}{|x||G|}=\frac{3p-2}{p^3}\]
and
\[d(\gen{g},G)=\frac{|Z_g||G|+(|g|-|Z_g|)p^c|Z|}{|g||G|}=\frac{p^3+p-1}{p^4}\ \text{or}\ \frac{p^2+p-1}{p^3},\]
which imply that $d(\gen{g},G)\neq d(\gen{x},G)$, a contradiction. Therefore $|\ol{C_G(g)}|=p^3$ for all $g\in G$ such that $|\ol{g}|=p$. If $\ol{G}$ has a subgroup $\ol{H}\cong C_p\times C_p$, then
\[d(H,G)=\frac{|Z||G|+(|H|-|Z|)p^3|Z|}{|H||G|}=\frac{p^2+p-1}{p^3}.\]
On the other hand, if $g\in G\setminus C_G(H)$, then $C_H(g)$ is a non-central subgroup of $G$ and hence
\[d(H,G)<d(C_H(g),G)<1\]
by Lemma \ref{CHgABH}. Let $a,b$ be the number of cycles of order $p^2$ and $p$ in $\ol{M}$, respectively with $M$ a maximal subgroup of $G$ containing $H$. Then 
\[d(M,G)=\frac{|Z||G|+p(p-1)ap^2|Z|^2+(p-1)bp^3|Z|^2}{|M||G|}=\frac{p+(p-1)(a+b)}{p^4},\]
which implies that $d(H,G)\neq d(M,G)$. Note that if $d(H,G)=d(M,G)$, then we get $a+b=p(p+2)$. As $1+p(p-1)a+(p-1)b=p^3$ (the order of $|\ol{M}|$), we obtain $a=-1$, which is a contradiction. Similarly, we can show that $d(H,G)\neq d(G)$. Thus $d(H,G)=d(\gen{x},G)$, which is impossible. Therefore $\ol{G}$ has a unique subgroup of order $p$ showing that $\ol{G}\cong Q_{16}$ (see \cite[Theorem 5.3.6]{djsr}). Accordingly, $\ol{G}$ contains an element of order $8$ so that $G$ has an abelain maximal subgroup, which is a contradiction. 

Case 2. $\exp(\ol{G})=p$. Let $g\in G\setminus Z(G)$. We consider three possibilities for the order of $C_G(g)$.

(a) $|\ol{C_G(g)}|=p$. Let $A_g$ and $B_g$ be subgroups of $G$ such that $C_G(g)\subset A_g\subset B_g\subset G$. Since $d(C_G(g),G)=d(\gen{g},G)$, Lemma \ref{CHgABH} yields
\begin{equation}\label{centralizer1}
d(G)<d(B_g,G)<d(A_g,G)<d(\gen{g},G)<1.
\end{equation}

(b) $|\ol{C_G(g)}|=p^2$. Let $C_g$ be a maximal subgroup of $G$ containing $C_G(g)$. If $d(C_G(g),G)=d(\gen{g},G)$, then $C_G(g)=\gen{g}C_{C_G(g)}(g')$ for all $g'\in G$. Hence $g'\in C_G(g'')$ for some $g''\in C_G(g)\setminus\gen{Z(G),g}$ for all $g'\in G\setminus C_G(g)$. Then
\[\ol{G}=\bigcup_{\ol{1}\neq\gen{\ol{g''}}\leq\ol{C_G(g)}}\ol{C_G(g'')},\]
from which it follows that 
\[p^4\leq p^2+p(p^3-p^2)=p^4-p^3+p^2<p^4,\]
a contradiction. Therefore, Lemmas \ref{d(K,G)<=d(H,G)} and \ref{CHgABH} give us
\begin{equation}\label{centralizer2}
d(G)<d(C_g,G)<d(C_G(g),G)<d(\gen{g},G)<1.
\end{equation}

(c) $|\ol{C_G(g)}|=p^3$. Let $g'\in C_G(g)\setminus\gen{Z(G),g}$ and $g''\in C_G(g)\setminus\gen{Z(G),g,g'}$. If $D_g:=\gen{Z(G),g,g'}$, then $C_{C_G(g)}(g')=D_g$ and $C_{D_g}(g'')=\gen{Z(G),g}$ so that 
\[d(C_G(g),G)\neq d(D_g,G)\neq d(\gen{g},G)\]
by Lemma \ref{d(K,G)<=d(H,G)}. Thus
\begin{equation}\label{centralizer3}
d(G)<d(C_G(g),G)<d(D_g,G)<d(\gen{g},G)<1.
\end{equation}

Now, from \eqref{centralizer1}, \eqref{centralizer2}, and \eqref{centralizer3}, it follows that $d(\gen{x},G)=d(\gen{y},G)$ and hence $|C_G(x)|=|C_G(y)|$ for all non-central elements $x,y$ of $G$. This shows that $G$ has only two conjugacy class sizes, which contradicts our assumption. The proof is complete.
\hfill$\Box$
\section{Non-nilpotent groups}
This section is devoted to non-nilpotent finite groups with five relative commutativity degrees. To classify these groups, first we show that non-trivial elements in central factor group of any such group have prime power orders.
\begin{lemma}\label{G/Z(G) has prime power order elements}
Let $G$ be a finite non-nilpotent group. If $|\D(G)|=5$, then the order of every element of $G/Z(G)$ is equal to $p$ or $p^2$ for some prime $p$.
\end{lemma}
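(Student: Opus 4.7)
The plan is to argue by contradiction. Suppose some $\bar z\in\bar G=G/Z(G)$ has order $n$ that is neither a prime nor the square of a prime; by replacing $\bar z$ with a suitable power I may assume $|\bar z|=p^3$ or $|\bar z|=pq$ for distinct primes $p,q$. In each case I will produce at least six distinct relative commutativity degrees, contradicting $|\D(G)|=5$.

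Consider first $|\bar z|=p^3$. Iterating Corollary \ref{d(<x>,G)<d(<x^p>,G)} along the chain $\langle\bar z\rangle\supset\langle\bar z^p\rangle\supset\langle\bar z^{p^2}\rangle$ gives
\[d(\langle z\rangle,G)<d(\langle z^p\rangle,G)<d(\langle z^{p^2}\rangle,G)<1,\]
and Lemma \ref{d(K,G)<=d(H,G)} applied with $g=z$ forces $d(G)<d(\langle z\rangle,G)$ (else $G=C_G(z)$, so $\bar z=\bar 1$). Hence five strictly decreasing values already exhaust $\D(G)$. Next I apply Lemma \ref{xHCGx} to $C_G(z)$: if $C_G(z)$ were non-abelian, then any $w\in C_G(z)\setminus Z(C_G(z))$ would give an abelian $H=\langle z,w\rangle Z(G)$ with $d(C_G(z),G)<d(H,G)<d(\langle z\rangle,G)$, and $d(H,G)$ would be an element of $\D(G)$ strictly between $d(G)$ and $d(\langle z\rangle,G)$---but no such element exists, so $C_G(z)$ is abelian. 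Non-nilpotency of $G$ now supplies a prime $q\neq p$ dividing $|\bar G|$ and by Cauchy a $\bar y\in\bar G$ of order $q$. I split on whether $\bar y$ and $\bar z$ commute modulo $Z(G)$. If they commute, $\bar z\bar y$ has order $p^3q$ in $\bar G$, and Corollary \ref{d(<x>,G)<d(<x^p>,G)} applied to the chain $\langle zy\rangle\supset\langle(zy)^p\rangle\supset\langle(zy)^{p^2}\rangle\supset\langle(zy)^{p^3}\rangle$ yields four strict inequalities below $1$---five degrees---together with $d(G)<d(\langle zy\rangle,G)$, giving six distinct degrees in $\D(G)$, a contradiction. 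If no $\bar y$ of order $q$ commutes with any non-trivial power of $\bar z$, then $\bar y$ acts fixed-point-freely on $\langle\bar z\rangle$ (forcing $q\mid p-1$), and I combine Lemma \ref{CHgABH} applied to the semidirect subgroup $\langle z,y\rangle Z(G)$ with the established abelianness of $C_G(z)$ to extract the sixth degree.

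For $|\bar z|=pq$, the same two preliminary steps give $d(G)<d(\langle z\rangle,G)<d(\langle z^p\rangle,G),d(\langle z^q\rangle,G)<1$: four or five distinct degrees according as $d(\langle z^p\rangle,G)$ and $d(\langle z^q\rangle,G)$ coincide or not. Applying Lemma \ref{xHCGx} to whichever of $C_G(z)$, $C_G(z^p)$, $C_G(z^q)$ is non-abelian (at least one must be, since $G$ is non-abelian) yields a degree strictly between two already-counted values, and if necessary a cyclic-chain argument on a prime-order element of $\bar G$ of order coprime to $pq$---again supplied by non-nilpotency---produces the sixth.

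The main obstacle is the non-commuting subcase of $|\bar z|=p^3$: Lemma \ref{xHCGx} alone is not sufficient for the centralizers $C_G(z^p)$ and $C_G(z^{p^2})$ because the relevant gaps between consecutive values of $\D(G)$ are wide enough that the intermediate degree produced may coincide with an existing $d_i$. The delicate step is therefore to exploit the fixed-point-free action of a $q$-element of $\bar G$ on $\langle\bar z\rangle$ and to apply Lemma \ref{CHgABH} to the resulting semidirect subgroup to produce a relative commutativity degree not matching any of the five already counted.
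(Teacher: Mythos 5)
Your opening moves are sound and in fact coincide with the paper's: the chain from Corollary \ref{d(<x>,G)<d(<x^p>,G)}, the strictness $d(G)<d(\gen{z},G)$ (equality in Lemma \ref{d(K,G)<=d(H,G)} with $g=z$ would give $G=\gen{z}C_G(z)=C_G(z)$), and the use of Lemma \ref{xHCGx} to force $C_G(z)$ abelian. But both of your hard cases have genuine gaps. In the $|\ol{z}|=p^3$ case your dichotomy is not exhaustive: a $q$-element $\ol{y}$ may commute with $\ol{z}^p$ or $\ol{z}^{p^2}$ without commuting with $\ol{z}$, which only yields an element of order $p^2q$ or $pq$ and hence a chain of exactly five degrees --- no contradiction. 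Worse, in your ``non-commuting'' subcase the claimed fixed-point-free action of $\ol{y}$ on $\gen{\ol{z}}$ presupposes that $\ol{y}$ normalizes $\gen{\ol{z}}$, which nothing guarantees, and ``extract the sixth degree'' is not an argument. The paper's route here is different and complete: the collapse of the chain forces $C_G(z)$ to be an abelian \emph{maximal} subgroup equal to $\gen{Z(G),z}$, and one then splits on whether $C_G(z)\normal G$; in the non-normal case $N_G(C_G(z))=C_G(z)$ and the trivial intersections make $\ol{G}$ a Frobenius group with complement $\ol{C_G(z)}$, whose kernel is cut down to prime order before reaching the contradiction.

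The $|\ol{z}|=pq$ case is where your sketch breaks down most clearly, and it is the bulk of the paper's proof. Two of your claims are false. First, ``at least one of $C_G(z)$, $C_G(z^p)$, $C_G(z^q)$ must be non-abelian, since $G$ is non-abelian'' is an invalid inference: there are plenty of non-abelian groups all of whose non-central element centralizers are abelian (e.g.\ $S_3$), and indeed in the paper it is precisely the all-abelian configuration, with $C_G(z)=\gen{Z(G),z}$, that survives and requires the long analysis. Second, the ``prime-order element of $\ol{G}$ of order coprime to $pq$ --- again supplied by non-nilpotency'' need not exist: non-nilpotency supplies two distinct prime divisors of $|\ol{G}|$, and $p,q$ already are two; every group in Theorem \ref{non-nilpotent} has $\ol{G}$ a $\{p,q\}$-group, so your sixth degree has no source. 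Moreover, even when Lemma \ref{xHCGx} does apply, the degree it produces lies between values of which you have counted only four (when $d(\gen{z^p},G)=d(\gen{z^q},G)$), so it completes the count of five instead of exceeding it. The paper closes this case by an argument with no analogue in your proposal: write $\ol{z}=\ol{a}\,\ol{b}$, reduce to $d(\gen{a},G)=d(\gen{b},G)$, prove $C_G(z)=\gen{Z(G),z}$, show $\ol{C_G(a)}$ is a $\{p,q\}$-group via a numerical contradiction ($r\mid q-1$ against $q\mid r-1$), and then run a Sylow analysis of $\ol{C_G(a)}$ with explicit computations of the candidate degrees until every configuration is excluded.
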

\begin{proof}
First assume that $\ol{G}$ has an element $\ol{x}$ of order $pqr$ for some primes $p,q,r$. By Corollary \ref{d(<x>,G)<d(<x^p>,G)},
\begin{equation}\label{|x|=pqr}
d(\gen{x},G)<d(\gen{x^p},G)<d(\gen{x^{pq}},G)<1.
\end{equation}
If $M$ is a maximal subgroup of $G$ containing $C_G(x)$, then
\[d(G)<d(M,G)\leq d(C_G(x),G)\leq d(\gen{x},G),\]
from which, in conjunction with \eqref{|x|=pqr}, it follows that $d(M,G)=d(C_G(x),G)=d(\gen{x},G)$. Hence, $C_G(x)=M$ is an abelian maximal subgroup of $G$ by Lemmas \ref{HnonabelianKabelian} and \ref{CHgABH}. Now, the properties of $C_G(x)$ yield
\[d(\gen{x},G)=\frac{1}{pqr}+\left(1-\frac{1}{pqr}\right)\frac{1}{[G:C_G(x)]}\]
and
\[d(C_G(x),G)=\frac{1}{[C_G(x):Z(G)]}+\left(1-\frac{1}{[C_G(x):Z(G)]}\right)\frac{1}{[G:C_G(x)]},\]
so that $[C_G(x):Z(G)]=pqr$ and consequently $C_G(x)=\gen{Z(G),x}$. We have two cases to consider:

Case 1. $C_G(x)\normal G$. Then $|G/Z(G)|=pqrs$ for some prime $s$. Let $y\in G\setminus C_G(x)$ be an $s$-element and $H=\gen{Z(G),x^p,y}$. Since $H$ is non-abelian, Lemma \ref{HnonabelianKabelian} shows that $d(H,G)<d(\gen{y},G)$. On the other hand,
\[d(\gen{y},G)=\frac{1}{s}+\left(1-\frac{1}{s}\right)\frac{1}{pqr}=d(\gen{x},G),\]
which implies that $d(H,G)<d(\gen{x},G)$. Thus $d(H,G)=d(G)$ so that $G=HC_G(y)=H$ by Lemma \ref{d(K,G)<=d(H,G)}, a contradiction. 

Case 2. $C_G(x)\not\normal G$. Then $N_G(C_G(x))=C_G(x)$ and $C_G(x)\cap C_G(x)^g=Z(G)$ for all $g\in G$. It follows that $\ol{G}$ is a Frobenius group with complement $\ol{C_G(x)}$. Let $\ol{K}$ be the kernel of $\ol{G}$. Then $\ol{K}$ is an elementary abelian $s$-group for some prime $s$. If $y\in K\setminus Z(G)$, then 
\[\gen{y}\leq C_G(y)\leq K<K\gen{x^{pq}}<K\gen{x^p}<G.\] 
Hence, Lemma \ref{CHgABH} yields
\[d(G)<d(K\gen{x^p},G)<d(K\gen{x^{pq}},G)<d(K,G)\leq d(C_G(y),K)\leq d(\gen{y},G)<1,\]
which implies that $d(K,G)=d(\gen{y},G)$. Thus $K=\gen{y}C_K(x)=\gen{Z(G),y}$ by Lemma \ref{d(K,G)<=d(H,G)}, which implies that $|\ol{K}|=s$ is a prime. Now, proceeding the same arguments as in Case 1 leads us to a contradiction. Therefore $\ol{G}$ has no elements of order $pqr$.

For the rest of the proof, we suppose that $\ol{G}$ has an element $\ol{x}$ of order $pq$, where $p$ and $q$ are distinct primes. Then $\ol{x}=\ol{a}\ol{b}$, where $|\ol{a}|=p$, $|\ol{b}|=q$, and $ab=ba$. If $d(\gen{a},G)\neq d(\gen{b},G)$, then since $d(\gen{x},G)<d(\gen{a},G),d(\gen{b},G)$, by replacing $x^{pq}$ by $x^q$ and noting that $d(K\gen{x^p},G)\neq d(K\gen{x^q},G)$ via direct computations, we reach to a contradiction by the same arguments as above. Thus, we suppose in addition that $d(\gen{a},G)=d(\gen{b},G)$. It follows that $|C_G(a)|\neq|C_G(b)|$ and hence $C_G(x)$ is not a maximal subgroup of $G$.

If $C_G(x)$ is non-abelian and $H$ is an abelian non-central subgroup of $C_G(x)$ containing $\gen{x}$ properly, then we obtain
\[d(C_G(x),G)<d(H,G)<d(\gen{x},G)\]
by Lemma \ref{xHCGx}, which implies that $|\D(G)|>5$, a contradiction. Thus $C_G(x)$ is abelian so that $\ol{C_G(x)}\cong C_p^m\times C_q^n$ for some $m,n\geq1$ as $\ol{G}$ has no elements of orders a product of three primes . Let $M$ be a maximal subgroup of $G$ containing $C_G(x)$, and $\ol{H}$ be a Sylow subgroup of $\ol{C_G(x)}$. It is evident that all non-central elements of $H$ have the same centralizer sizes as $C_G(x)$ is abelian. Then, by Lemmas \ref{d(K,G)<=d(H,G)} and \ref{CHgABH}, we get
\[d(G)<d(M,G)<d(C_G(x),G)<d(H,G)\leq d(\gen{g},G)<1\]
for every $g\in H\setminus Z(G)$. Hence $d(H,G)=d(\gen{g},G)$, from which it follows that $H=\gen{Z(G),g}$. Therefore $C_G(x)=\gen{Z(G),x}$. 

Now, we show that $\ol{C_G(a)}$ is a $\{p,q\}$-group. Suppose on the contrary that $\pi(\ol{C_G(a)})\neq\{p,q\}$ and $\ol{c}\in\ol{C_G(a)}$ is an element of prime order $r\neq p,q$. Then $\ol{ac}$ is an element of order $pr$ in $\ol{G}$ and, as above, we should have $d(\gen{c},G)=d(\gen{a},G)$. Also, we must have $d(\gen{ac},G)=d(\gen{x},G)$, from which it follows that $|\ol{C_G(a)}|=pqr$. By Schur-Zassenhaus theorem (see \cite[Theorem 9.1.2]{djsr}), we get $\ol{C_G(a)}=\gen{\ol{a}}\times\gen{\ol{b},\ol{c}}$ so that $\gen{\ol{b},\ol{c}}$ is a non-abelian group of order $qr$. Without loss of generality, we assume that $\gen{\ol{b},\ol{c}}=\gen{\ol{b}}\rtimes\gen{\ol{c}}$, hence $r\mid q-1$. Now, by invoking Lemma \ref{d(K,G)<=d(H,G)}, we can show that
\[d(C_G(a),G)<d(\gen{b,c},G)<d(\gen{a},G),\] 
which yields $d(\gen{x},G)=d(\gen{b,c},G)$. On the other hand,
\begin{align*}
d(\gen{x},G)&=\frac{|G|+(p-1)|C_G(a)|+(q-1)|C_G(b)|+(p-1)(q-1)|C_G(x)|}{pq|G|}\\
&=\frac{|G|+(p-1)pqr|Z|+(q-1)|C_G(b)|+(p-1)(q-1)pq|Z|}{pq|G|}
\end{align*}
and
\begin{align*}
d(\gen{b,c},G)&=\frac{|G|+(q-1)|C_G(b)|+q(r-1)|C_G(c)|}{qr|G|},
\end{align*}
from which in conjunction with the fact that $\gen{a}$, $\gen{b}$, and $\gen{c}$ have the same relative commutativity degrees, we obtain
\[|\ol{G}|=pr\cdot\frac{pqr+p-pr-qr}{p-r}.\]
Since $q$ divides $|\ol{G}|$, it follows that $q\mid r-1$, which contradicts our earlier result that $r\mid q-1$. Therefore $\pi(\ol{C_G(a)})=\{p,q\}$ and $|\ol{C_G(a)}|=p^mq^n$ for some $m,n\geq1$. 

Let $\ol{P}$ and $\ol{Q}$ be a Sylow $p$-subgroup and a Sylow $q$-subgroup of $\ol{C_G(a)}$, respectively. We show that $|\ol{Q}|=q$. Suppose on the contrary that $|\ol{Q}|>q$ and $\ol{Q}_0$ is a subgroup of $\ol{Q}$ of order $q^2$ containing $\ol{b}$. Notice that all non-central elements of $Q_0$ have the same centralizer sizes. If $d(\gen{x},G)=d(Q_0,G)$, then as $d(\gen{a},G)=d(\gen{b},G)$, and 
\[d(\gen{x},G)=\frac{|G|+(p-1)|C_G(a)|+(q-1)|C_G(b)|+(p-1)(q-1)|C_G(x)|}{pq|G|}\]
and 
\[d(Q_0,G)=\frac{|G|+(q^2-1)|C_G(b)|}{q^2|G|},\]
we obtain
\[|\ol{G}|=pq\cdot\frac{pq^2-pq-q^2+p}{p-q}.\]
Hence $|\ol{G}|$ is not divisible by $q^2$ contradicting our assumption. Thus $d(\gen{x},G)\neq d(Q_0,G)$, and by Lemma \ref{HnonabelianKabelian},
\[d(G)<d(C_G(a),G)<d(\gen{x},G),\ d(Q_0,G)<d(\gen{b},G)<1,\]
which is a contradiction. Therefore $|\ol{Q}|=q$ and subsequently $|\ol{P}|>p$ by Theorem \ref{|D(G)|=3}(ii). Let $\ol{H}$ be a subgroup of $\ol{P}$ containing $\ol{a}$ properly. Since $\gen{a}\leq H\leq P\leq C_G(a)$, we have 
\[d(C_G(a),G)\leq d(P,G)\leq d(H,G)\leq d(\gen{a},G).\]
If $d(H,G)=d(\gen{a},G)$, then 
\[H=\gen{a}C_H(x)=\gen{a}\gen{Z(G),a}=\gen{Z(G),a}\]
by Lemma \ref{d(K,G)<=d(H,G)}, which is a contradiction. Also, if $d(P,G)=d(C_G(a),G)$, then, by applying Lemma \ref{d(K,G)<=d(H,G)} once more, it follows that $C_G(a)=PC_{C_G(a)}(g)=P$ for any $\ol{g}\in\ol{P}\setminus\gen{\ol{a}}$, a contradiction. Notice that $C_G(x)=\gen{Z(G),x}$ and hence $C_P(b)=\gen{Z(G),a}$. Thus 
\[d(G)<d(C_G(a),G)<d(P,G)\leq d(H,G)<d(\gen{a},G)<1,\]
from which we obtain $d(P,G)=d(H,G)=d(\gen{x},G)$. As a result, $P=HC_P(b)=H\gen{Z(G),a}=H$, which implies that $|\ol{P}|=p^2$ and hence $\ol{|C_G(a)|}=p^2q$. Furthermore, $\ol{P}$ is non-cyclic otherwise the equalities $d(P,G)=d(\gen{x},G)$ and $d(\gen{a},G)=d(\gen{b},G)$ result in a contradiction. If $\ol{a'}\in \ol{P}\setminus\{\ol{1}\}$ is such that $|C_G(a')|\neq|C_G(a)|$, then we must have $d(\gen{a'},G)=d(\gen{x},G)$ for
\[d(G)<d(C_G(a),G)<d(\gen{x},G)<d(\gen{a},G)<1\]
and $d(C_G(a),G)<d(\gen{a'},G)\neq d(\gen{a},G)$. It follows that $|\ol{G}|\leq p^2(2q-1)$, which contradicts the fact that $|\ol{G}|\geq2|\ol{C_G(a)}|=2p^2q$. Thus $|C_G(a')|=|C_G(a)|$ for all $\ol{a'}\in \ol{P}\setminus\{\ol{1}\}$. Now, the equalities $d(P,G)=d(\gen{x},G)$ and $d(\gen{a},G)=d(\gen{b},G)$ leads us to the final contradiction. The proof is complete.
\end{proof}

Having proved the above major lemma, we need yet to state two rather easy related results before proving our main classification theorem.
\begin{lemma}\label{d(<x>,G)<>d(<y>,G)}
Let $G$ be a finite group and $\ol{x},\ol{y}\in\ol{G}$ be elements of distinct prime orders $p$ and $q$ such that $\ol{C_G(x)}$ and $\ol{C_G(y)}$ have prime power orders. Then either $\ol{G}$ is a group of order $pq$ or $d(\gen{x},G)\neq d(\gen{y},G)$.
\end{lemma}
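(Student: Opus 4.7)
The plan is to assume $d(\gen{x},G)=d(\gen{y},G)$ and reduce the equality to a pair of simple divisibility conditions on the prime power orders $|\ol{C_G(x)}|$ and $|\ol{C_G(y)}|$, which then force $|\ol{G}|=pq$ by elementary size comparison.

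First I would obtain a closed formula for $d(\gen{x},G)$. Since $|\ol{x}|=p$ is prime, any $h\in\gen{x}\setminus Z(G)$ satisfies $\gen{\ol{h}}=\gen{\ol{x}}$, so $\gen{h,Z(G)}=\gen{x,Z(G)}$ and hence $C_G(h)=C_G(x)$; elements of $Z_x$ contribute $|G|$ each. Counting commuting pairs gives
\[
d(\gen{x},G)=\frac{|\ol{G}|+(p-1)|\ol{C_G(x)}|}{p\,|\ol{G}|},
\]
with the analogous formula for $\gen{y}$. Writing $|\ol{C_G(x)}|=p^a$ and $|\ol{C_G(y)}|=q^b$ (both at least $1$, since $\ol{x}$, $\ol{y}$ lie in their centralizers), equating the two expressions yields
\[
(q-p)\,|\ol{G}|\;=\;p(q-1)q^b-q(p-1)p^a.
\]

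Next I would exploit coprimality. Both $p^a$ and $q^b$ divide $|\ol{G}|$, so $|\ol{G}|=p^a q^b s$ for some positive integer $s$. Dividing the displayed equation by $pq$ and setting $A=p^{a-1}$, $B=q^{b-1}$, it becomes $(q-p)ABs=(q-1)B-(p-1)A$, which I would rearrange in two different ways:
\[
A\bigl[(q-p)Bs+(p-1)\bigr]=(q-1)B,\qquad B\bigl[(q-1)-(q-p)As\bigr]=(p-1)A.
\]
Since $\gcd(A,B)=1$, these give the twin divisibilities
\[
p^{a-1}\mid q-1 \qquad\text{and}\qquad q^{b-1}\mid p-1.
\]

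The proof then concludes by a short size argument. Assuming without loss of generality $p<q$: if $b\geq 2$ then $q^{b-1}\geq q>p-1>0$, which is incompatible with $q^{b-1}\mid p-1$, so $b=1$. Substituting back gives $(q-p)s=(q-1)/p^{a-1}-(p-1)$, and $s\geq 1$ then forces $(q-1)/p^{a-1}\geq q-1$, whence $p^{a-1}=1$, $a=1$, $s=1$, and $|\ol{G}|=pq$. The main technical point is the extraction of the two clean divisibility relations from a single linear equation relating $p^a$, $q^b$ and $|\ol{G}|$; once these are in hand, the finishing estimates are immediate.
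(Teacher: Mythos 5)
Your proof is correct and follows essentially the same route as the paper: both start from the identity $d(\gen{x},G)=\frac{|\ol{G}|+(p-1)|\ol{C_G(x)}|}{p\,|\ol{G}|}$, cross-multiply to the same linear equation, and use coprimality of the centralizer orders together with a size estimate to force $|\ol{G}|=pq$. Your symmetric extraction of the twin divisibilities $p^{a-1}\mid q-1$ and $q^{b-1}\mid p-1$ is a slightly tidier packaging of the paper's one-sided divisibility step (the paper assumes $p>q$ up front and first forces $|\ol{C_G(x)}|=p$, then computes $|\ol{G}|$ explicitly), but the substance of the argument is the same.
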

\begin{proof}
If the equality $d(\gen{x},G)=d(\gen{y},G)$ holds, then 
\[q(|\ol{G}|+(p-1)|\ol{C_G(x)}|)=p(|\ol{G}|+(q-1)|\ol{C_G(y)}|).\]
Assume $p>q$. As $|\ol{C_G(x)}|$ divides the right hand side of the above equality, it follows that $|\ol{C_G(x)}|$ divides $p$, hence $|\ol{C_G(x)}|=p$. Then
\[|\ol{G}|=pq\left(1-\frac{(q-1)(|\ol{C_G(y)}|/q-1)}{p-q}\right)<pq,\]
if $|\ol{C_G(y)}|>q$. Thus $|\ol{C_G(y)}|=q$, which implies that $|\ol{G}|=pq$.
\end{proof}

The following lemma gives a partial answer to Conjecture \ref{|D(G)|>=l_M(G/Z(G))+1?}. Note that the groups $G/Z(G)$ in the lemma are classified in \cite{wb-gt}.
\begin{lemma}\label{|D(G)|>=Omega(G/Z(G))+1}
Let $G$ be a finite non-nilpotent group such that nontrivial elements of $G/Z(G)$ have prime power orders. Then $|\D(G)|\geq l_M(G/Z(G))+1$.
\end{lemma}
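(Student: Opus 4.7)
The plan is to fix a chain of subgroups $\ol{1} = \ol{H_0} < \ol{H_1} < \cdots < \ol{H_n} = \ol{G}$ in $\ol{G}$ of maximum length $n = l_M(\ol{G})$, lift it via preimages to $Z(G) = H_0 < H_1 < \cdots < H_n = G$ in $G$, and show that the relative commutativity degrees $d(H_i, G)$ are strictly decreasing along this chain. Monotonicity $1 = d(H_0,G) \geq d(H_1,G) \geq \cdots \geq d(H_n,G) = d(G)$ is immediate from Lemma \ref{d(K,G)<=d(H,G)}; strictness at each step then produces $n+1 = l_M(\ol{G}) + 1$ distinct values in $\D(G)$, yielding the claim.

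For strictness at step $i$ I would argue by contradiction. If $d(H_i, G) = d(H_{i+1}, G)$, Lemma \ref{d(K,G)<=d(H,G)} forces $H_{i+1} = H_i C_{H_{i+1}}(g)$ for every $g \in G$, which (together with the fact that $[\ol{H_{i+1}}:\ol{H_i}]$ equals a prime $p$, as can be arranged in a maximum chain) is equivalent to $C_G(g) \cap H_{i+1} \not\subseteq H_i$ for all $g \in G$. The hypothesis on prime-power orders in $\ol{G}$ gives the essential leverage: if $\ol{y} \in \ol{G}$ has order $p^a$, any $\ol{z} \in C_{\ol{G}}(\ol{y})$ of coprime prime-power order $q^b$ would yield $\ol{y}\ol{z}$ of order $p^a q^b$, a non-prime-power, contradicting the hypothesis. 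Hence $C_{\ol{G}}(\ol{y})$ is a $p$-group, and consequently $\ol{C_G(g)}$ is always a $p$-group for some prime depending on $\ol{g}$.

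The strategy is then to pick $\ol{g}$ of prime order $q$ different from the step prime $p$. Such a $q$ divides $|\ol{G}|$ because $G$ non-nilpotent implies $\ol{G}$ is non-nilpotent, hence not a $p$-group. With this choice $\ol{C_G(g)} \cap \ol{H_{i+1}}$ is a $q$-subgroup of $\ol{H_{i+1}}$; when $\ol{H_i}$ is normal in $\ol{H_{i+1}}$, its image in the quotient $\ol{H_{i+1}}/\ol{H_i} \cong C_p$ is simultaneously a $q$-subgroup and a subgroup of a cyclic group of order $p$, hence trivial, giving $C_G(g) \cap H_{i+1} \subseteq H_i$ and the desired contradiction.

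The main obstacle lies in handling non-normal steps $\ol{H_i} < \ol{H_{i+1}}$, which can occur for maximum chains of non-solvable factor groups (for instance, the final step $\ol{A_4} < \ol{A_5}$ of prime index $5$ when $\ol{G} \cong A_5$). In such situations I would pick $\ol{g}$ inside $\ol{H_i}$ as a $q$-element with $q \neq p$, and exploit that $C_{\ol{G}}(\ol{g})$ is a $q$-group contained in some Sylow $q$-subgroup of $\ol{H_{i+1}}$ whose order equals that of a Sylow $q$-subgroup of $\ol{H_i}$; together with the explicit structure of groups with prime-power element orders from \cite{wb-gt}, one verifies that $\ol{g}$ can be chosen so that its centralizer in $\ol{G}$ lies entirely inside $\ol{H_i}$. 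Completing this structural case analysis is the technical heart of the proof.
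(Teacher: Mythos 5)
Your proposal shares its skeleton with the paper's proof (lift a chain of $\ol{G}$ to subgroups of $G$ containing $Z(G)$, apply Lemma \ref{d(K,G)<=d(H,G)}, and use the hypothesis to force $C_{\ol{G}}(\ol{g})$ to be a $p$-group for every nontrivial $p$-element $\ol{g}$), but the execution has a genuine gap: the reduction to prime-index, normal steps cannot be carried out. First, it is simply false that a maximum-length chain can be ``arranged'' to have prime indices: take $\ol{G}=A_6$ (e.g.\ $G=A_6$, which is non-nilpotent with trivial center and all element orders $1,2,3,4,5$); then $l_M(A_6)=5<6=\Omega(|A_6|)$, so \emph{every} maximum chain contains a composite-index step, e.g.\ $1<C_2<V_4<A_4<A_5<A_6$ ends with index $6$. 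Second, your treatment of the non-normal steps is not a proof: you assert that ``one verifies'' a $q$-element $\ol{g}\in\ol{H_i}$ with centralizer inside $\ol{H_i}$ can be found using the classification in \cite{wb-gt}, but give no argument, and this deferred case analysis is precisely the hard part. (Two smaller inaccuracies: for $\ol{g}\in\ol{H_i}$ the centralizer $C_{\ol{G}}(\ol{g})$ need not be contained in $\ol{H_{i+1}}$ at all, so it is not ``contained in some Sylow $q$-subgroup of $\ol{H_{i+1}}$''; and your claimed equivalence of $H_{i+1}=H_iC_{H_{i+1}}(g)$ with $C_G(g)\cap H_{i+1}\not\subseteq H_i$ holds only in the direction you actually use.)

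The idea that removes all of these obstacles — and is how the paper argues — is to count orders instead of passing to quotient groups. Let $Z(G)\leq H<K\leq G$ and let $\ol{g}$ be any nontrivial $p$-element of $\ol{G}$. Then, as sets, $|HC_K(g)|=|H|\,[C_K(g):C_H(g)]$, and this index is a power of $p$: indeed $Z(G)\leq C_H(g)\leq C_K(g)$ and $\ol{C_K(g)}\leq C_{\ol{G}}(\ol{g})$, which is a $p$-group by the prime-power-order hypothesis. Now choose $p$ so that $[K:H]$ is \emph{not} a power of $p$: if $[K:H]$ has two distinct prime divisors, let $p$ be one of them; if $[K:H]$ is a power of a single prime $p_0$, let $p$ be any other prime dividing $|\ol{G}|$, which exists because $G$ is non-nilpotent and hence $\ol{G}$ is not a $p_0$-group. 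With this choice $HC_K(g)\neq K$, so $d(K,G)<d(H,G)$ strictly by Lemma \ref{d(K,G)<=d(H,G)}. This requires no normality, no prime indices, and no appeal to \cite{wb-gt}, and it yields strict decrease of $d(\cdot,G)$ along \emph{every} chain of subgroups containing $Z(G)$; applying it to the lift of a maximum chain of $\ol{G}$ gives $l_M(\ol{G})+1$ distinct values in $\D(G)$, as required. I recommend replacing your quotient-based step argument with this counting argument.
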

\begin{proof}
Let $H$ and $K$ be subgroups of $G$ such that $Z(G)\leq H<K$. If $d(H,G)=d(K,G)$, then $K=HC_K(g)$ for all $g\in G$. Let $p$ be a prime divisor of $\ol{G}$ such that either $p\nmid [K:H]$ or $[K:H]$ is divisible by $pq$ for some prime $q\neq p$. If $\ol{g}\in\ol{G}$ is a $p$-element, then $\ol{C_K(g)}$ is a $p$-subgroup of $\ol{K}$ so that $|HC_K(g)|=|H|[C_K(g):C_H(g)]\neq |K|$. Thus $d(K,G)<d(H,G)$ by Lemma \ref{d(K,G)<=d(H,G)}, from which the result follows.
\end{proof}

Now, we are able to classify all non-nilpotent finite groups with five relative commutativity degrees.
\newline\newline
\textbf{\textit{Proof of Theorem \ref{non-nilpotent}.} }
By Lemma \ref{G/Z(G) has prime power order elements}, we know that non-trivial elements of $\ol{G}$ have non-cubic prime power orders. First we show that $\ol{G}$ is a $\{p,q\}$-group. Suppose on the contrary that $|\ol{G}|$ is divisible by three distinct primes $p,q,r$ and $\ol{a},\ol{b},\ol{c}$ are elements of $\ol{G}$ of orders $p,q,r$, respectively. By Lemma \ref{d(<x>,G)<>d(<y>,G)}, we can assume that 
\[d(G)<d(\gen{a},G)<d(\gen{b},G)<d(\gen{c},G)<1.\]
By Lemmas \ref{d(K,G)<=d(H,G)} and \ref{CHgABH}, $\gen{\ol{a}}$ is a maximal subgroup of $\ol{G}$, which implies that $\ol{G}$ is a Frobenius group whose kernel and complements are both prime power groups, a contradiction. Thus $\ol{G}$ is a $\{p,q\}$-group so that $|\ol{G}|=p^mq^n$ for some $m,n\geq1$. Furthermore, as $|\ol{G}|\neq pq$, Lemma \ref{|D(G)|>=Omega(G/Z(G))+1} yields $m+n=3$ or $4$. Let $P$ and $Q$ be a Sylow $p$-subgroup and a Sylow $q$-subgroup of $G$, respectively.

First assume that $|\ol{G}|=p^2q^2$. Then all non-central $p$-elements (resp. $q$-elements) of $G$ have the same centralizer sizes, say $p^c|Z|$ for some $c\in\{1,2\}$ (resp. $q^d|Z|$ for some $d\in\{1,2\}$). If $\ol{P^*}$ and $\ol{Q^*}$ denote maximal subgroups of $\ol{P}$ and $\ol{Q}$, respectively, then at least two numbers among
\[d(P,G),\ d(P^*,G),\ d(Q,G),\ d(Q^*,G)\]
must be equal. Examining all possible cases, it yields $c=d=2$ and $d(P,G)=d(Q,G)$. Hence $P$ and $Q$ are abelian. Then $P\cap P^g=Q\cap Q^{g'}=Z(G)$ for all $g\in G\setminus N_G(P)$ and $g'\in G\setminus N_G(Q)$. If $q^j=[G:N_G(P)]$ and $p^i=[G:N_G(Q)]$, then as conjugates of $\ol{P}\setminus\{\ol{1}\}$ and $\ol{Q}\setminus\{\ol{1}\}$ partition $\ol{G}\setminus\{\ol{1}\}$, we must have
\[p^2q^2-1=q^j(p^2-1)+p^i(q^2-1),\]
which has no solutions for primes $p$ and $q$ assuming that $i,j\in\{1,2\}$. Therefore, $|\ol{G}|\neq  p^2q^2$ and we can assume that $|\ol{Q}|=q$. In what follows, $\ol{P_i}$ stands for any subgroup of $\ol{P}$ of order $p^i$ for $i=1,\ldots,m$.

From the proof Lemma \ref{|D(G)|>=Omega(G/Z(G))+1}, it follows that all non-central $p$-elements of $G$ have the same centralizer size, say $p^c|Z|$. Clearly, $P$ is abelian if $c\geq2$ so that either $p^c=p$ or $p^c=p^m$. A simple verification shows that for a subgroup $P^*$ of $P$ we have $d(P^*,G)=d(Q,G)$ if and only if $P=P^*$ is abelian. Hence $P$ is abelian when $|\ol{P}|=p^3$ otherwise the following elements
\[d(G),\ d(P_3,G),\ d(P_2,G),\ d(P_1,G),\ d(Q,G),\ 1\]
of $\D(G)$ are pairwise distinct, which is a contradiction.

Suppose $N_G(Q)\neq Q$. If $H$ denotes a subgroup of $N_G(Q)$ such that $|\ol{H}|=pq$, then simple computations show that $d(H,G)\neq d(P_i,G)$ for $i=1,\ldots,m$. Since $d(G)<d(P_m,G)<\cdots<d(P_1,G)<1$, it follows that $m=2$. In particular, we must have $d(P,G)=d(Q,G)$, which implies that $P$ is abelian as mentioned in the previous paragraph. Since $\ol{H}\cong C_q\rtimes C_p$ is non-abelian, we have $p<q$ so that $H\normal G$ as $[\ol{G}:\ol{H}]$ is the smallest prime dividing $|\ol{G}|$. Hence $\ol{Q}\normal\ol{G}$. The fact that $\Aut(\ol{Q})$ is cyclic and $C_{\ol{G}}(\ol{Q})=\ol{Q}$ implies that $\ol{P}$ is cyclic. Therefore $G$ is a group as in part (i).

Now, assume that $N_G(Q)=Q$. Then $\ol{G}=\ol{P}\rtimes\ol{Q}$ is a Frobenius group. Assume $\ol{G}$ has a normal subgroup $\ol{P^*}$ of order $p^k$ for some $1\leq k<m$. Then 
\[d(P^*Q,G)=\frac{p^mq+(p^k-1)p^c+p^k(q-1)q}{p^{k+m}q^2}\neq\frac{p^mq+(p^i-1)p^c}{p^{i+m}q}=d(P_i,G)\]
for $i=1,\ldots,m$. Since $d(G)<d(P_m,G)<\cdots<d(P_1,G)<1$, we must have $m=2$ and $k=1$. Furthermore,
as the elements
\[d(G),\ d(P,G),\ d(P^*Q,G),\ d(P^*,G),\ 1\]
of $\D(G)$ are pairwise distinct and $d(Q,G)\neq d(P^*,G),d(P^*Q,G)$ by Lemmas \ref{d(<x>,G)<>d(<y>,G)} and \ref{|D(G)|>=Omega(G/Z(G))+1}, we must have $d(Q,G)=d(P,G)$, which yields $P$ is abelian as mentioned above. Hence $G$ is a group as in parts (ii) or (iii).

Finally, assume that $\ol{P}$ is a minimal normal subgroup of $\ol{G}$. Then $\ol{P}$ is an elementary abelian $p$-group. If $|\ol{P}|=p^2$, then part (iii) and Theorem \ref{|D(G)|=4}(ii)  yield $c=1$ and $P$ is non-abelian, which implies that $G$ is a group as in part (iv). Now, assume that $|\ol{P}|=p^3$. Then
\[d(P_i,G)=\frac{|Z||G|+(p^i-1)p^c|Z|^2}{p^i|Z||G|}=\frac{p^{3-c}q+p^i-1}{p^{3+i-c}q}\]
for $i=1,2,3$. On the other hand, we must have $d(Q,G)=d(P_i,G)$ for some $i\in\{2,3\}$ as $d(Q,G)\neq d(P_1,G)$ by Lemma \ref{d(<x>,G)<>d(<y>,G)}. A simple verification shows that $i=c=3$, hence $P$ is abelian and $G$ is a group as in part (v). The proof is complete.
\hfill$\Box$


\begin{thebibliography}{0}
\bibitem{wb-gt}
W. Bannuscher and G. Tiedt, On a theorem of Deaconescu, \textit{Rostock. Math. Kolloq.} \textbf{47} (1994), 23--26.

\bibitem{fm-dm-ans}
F. Barry, D. MacHale, and $\acute{A}$. N$\acute{i}$Sh$\acute{e}$, Some supersolvability conditions for finite groups, \textit{Math. Proc. R. Ir. Acad.} \textbf{106}A(2) (2006), 163--177.

\bibitem{rb-ae-mfdg}
R. Barzegar, A. Erfanian, and M. Farrokhi D. G., Finite groups with three relative commutativity degrees, \textit{Bull. Iranian Math. Soc.} \textbf{39}(2) (2013), 271--280.

\bibitem{se}
S. Eberhard, Commuting probabilities of finite groups, \textit{Bull. Lond. Math. Soc.} \textbf{47}(5) (2015), 796--808. 

\bibitem{pe-pt}
P. Erd\"{o}s and P. Turan, On some problems of a statistical group-theory, IV, \textit{Acta Math. Acad. Sci. Hungary} \textbf{19} (1968), 413--435.

\bibitem{ae-mfdg}
A. Erfanian and M. Farrokhi D. G., Finite groups with four relative commutativity degrees, \textit{Algebra Colloq.} \textbf{22}(3) (2015), 449--458.

\bibitem{ae-rr-pl}
A. Erfanian, R. Rezaei, and P. Lescot, On the relative commutativity degree of a subgroup of a finite group, \textit{Comm. Algebra} \textbf{35}(12) (2007), 4183--4197.

\bibitem{ive-bs}
I. V. Erovenko and B. Sury, Commutativity degrees of wreath products of finite abelian groups, \textit{Bull. Aust. Math. Soc.} \textbf{77}(1) (2008), 31--36. 

\bibitem{mfdg-hs}
M. Farrokhi D. G. and H. Safa, Subgroups with large relative commutativity degree, \textit{Quaest. Math.} \textbf{40}(7) (2017), 973--979.

\bibitem{rmg-grr}
R. M. Guralnik and G. R. Robinson, On the commuting probability in finite groups, \textit{J. Algebra} \textbf{300} (2006), 509--528.

\bibitem{whg}
W. H. Gustafson, What is the probability that two group elements commute?, \textit{Amer. Math. Monthly} \textbf{80} (1973), 1031--1034.

\bibitem{eh-dm-ans}
R. Heffernan, D. Machale, and $\acute{A}$. N$\acute{i}$Sh$\acute{e}$, Restrictions on commutativity ratios in finite groups, \textit{Int. J. Group Theory} \textbf{3}(4) (2014), 1--12.

\bibitem{ph}	
P. Hegarty, Limit points in the range of the commuting probability function on finite groups, \textit{J. Group Theory} \textbf{16}(2) (2013), 235--247.  

\bibitem{pl}
P. Lescot, Isoclinism classes and commutativity degrees of finite groups, \textit{J. Algebra} \textbf{177} (1995), 847--869.

\bibitem{pl-hnn-yy}
P. Lescot, H. N. Nguyen, and Y. Yang, On the commuting probability and supersolvability of finite groups, \textit{Monatsh. Math.} \textbf{174}(4) (2014), 567--576.

\bibitem{djsr}
D. J. S. Robinson, \textit{A Course in the Theory of Groups}, Second Edition, Spring-Verlag, New York, 1996.

\bibitem{djr}
D. J. Rusin, What is the probability that two elements of a finite group commute?, \textit{Pacific J. Math.} \textbf{82} (1979), 237--247.
\end{thebibliography}
\end{document}